\documentclass[11pt]{article}
\usepackage{amscd,amssymb,amsmath,latexsym,graphicx,amsxtra,amsthm}
\usepackage{makeidx}
\usepackage[all]{xy}
\usepackage{graphicx}
\usepackage{epstopdf}

\def\ndv{\ {\mid \kern -0.65 em {\scriptstyle \not}}\ \ \ }

\def\Z{\Bbb{Z}}
\def\R{\Bbb{R}}

\def\G{\mathcal{G}}
\def\E{\mathcal{E}}

\def\bb{{\vec b}}

\def\int{\operatorname{int}}

\def\Int{\operatorname{Int}}

\def\ovl{\overline}

\newtheorem{theorem}{Theorem}[section]
\newtheorem{lemma}[theorem]{Lemma}
\newtheorem{corol}[theorem]{Corollary}
\newtheorem{remark}[theorem]{Remark}
\newtheorem{prop}[theorem]{Proposition}

\newtheorem{obs}{Remark}[section]
 \newtheorem{dfn}{Definition}[section]

\usepackage{color}
\usepackage{color}
 \long\def\red#1{\textcolor {red}{#1}}


\title{{\bf Deficient and multiple points of maps into a manifold}}

\author{Daciberg  L. Gon\c calves \\ Tha\'is F. M. Monis \\ Stanis\l{a}w Spie\.z}

\begin{document}

\maketitle

\abstract{For a map $f:X \to M$ into a manifold $M$, we study the sets of deficient and multiple points of $f$. In case of the set of deficient points, we estimate its dimension. For multiple points, we study its density in $X$, and  we also provide examples where its complement is dense.


\section{Introduction}

P.~T.~Church and J.~G.~Timourian in \cite{CT} studied the deficient points of maps between manifolds. 
\red{For} a map $f: M \to N$ between manifolds of same dimension, a point $y\in N$ is called deficient if the pre-image of
$y$ has less then $A(f)$ points, where $A(f)$ denotes the Hopf's absolute degree of $f$. Related to the notion of deficient point, there is the notion of multiple point of a map. A point $x \in X$ is a multiple point of a map $f: X \to Y$ if $f^{-1}(f(x))\neq \{x\}$, otherwise $x$ is called a single point. The present work extends some results about deficient and multiple points in the literature, which we will describe in more detail.

By {\it dimension of a space $X$ we mean the  covering dimension}, 
which sometimes is called  topological dimension (see  \cite{HW} and \cite{Engelking}). The following result was proved by P.~T.~Church and J.~G.~Timourian in \cite{CT}. 

\begin{theorem}[Theorem 1.1, 2.7, 3.3 and 3.4 \cite{CT}]  \label{theoremCT} 
Suppose $M$ and $N$ are oriented connected $n$-manifolds and $f: M \to N$ is a proper map  with degree $\deg f \ne 0$.
Let $E_f$ be the set of points $y \in N$ for which $f^{-1}(y)$ is discrete and has less than $|\deg f|$ essential points. 
\begin{enumerate}

\item[\rm(1)] Then $\dim E_f \le n-1$ and, moreover, $E_f$ contains no closed (in $N$) subset of dimension $n-1$. 

\item[\rm(2)] If $f$ is discrete (i.e., each $f^{-1}(y)$ is discrete), then $\dim (\ovl{E}_f) \le n-2$. 

\end{enumerate}
\end{theorem}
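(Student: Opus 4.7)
The plan is to reduce both parts to properties of the \emph{local degree}. Recall that for $x \in f^{-1}(y)$ isolated in its fiber, the local degree $\mu(f,x) \in \Z$ is defined via excision on a small neighborhood $U$ of $x$ with $f^{-1}(y) \cap \ovl U = \{x\}$; by definition $x$ is essential iff $\mu(f,x) \ne 0$, and when $f^{-1}(y)$ is finite the sum formula $\sum_{x \in f^{-1}(y)} \mu(f,x) = \deg f$ holds.

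For (1), the first step is a persistence statement: if $f^{-1}(y_0)$ is discrete and $x_0 \in f^{-1}(y_0)$ is essential, there exist neighborhoods $U$ of $x_0$ and $V$ of $y_0$ such that for every $y \in V$, $\sum_{x \in f^{-1}(y) \cap U} \mu(f,x) = \mu(f,x_0) \ne 0$; in particular $U$ contains an essential preimage of every $y \in V$. Hence the essential-count function is lower semicontinuous on the open set $D \subset N$ of $y$ with discrete fibers. Next one shows that every nonempty open $W \subset N$ contains a $y$ with at least $|\deg f|$ essential preimages, e.g.\ by producing a $y$ whose preimages all have local degree $\pm 1$, so that the sum formula forces at least $|\deg f|$ nonzero summands. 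Thus $E_f$ has empty interior in $N$, giving $\dim E_f \le n-1$.

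To exclude a closed $(n-1)$-dimensional $A \subset E_f$, assume such an $A$ exists. By the Hurewicz--Wallman criterion $A$ locally separates $N$: some open $W \subset N$ decomposes as $W = W_+ \cup A \cup W_-$ with $W_\pm$ on the two sides of $A$. On each of $W_\pm$ the essential count is already at least $|\deg f|$ by the previous step, and one uses persistence together with excision to track these essential preimages as $y$ approaches a point $y_0 \in A$ from either side. The sum formula over $f^{-1}(y_0)$ then forces $y_0$ to have $\ge |\deg f|$ essential preimages, contradicting $y_0 \in E_f$. The main obstacle is controlling this tracking when fibers over $A$ need not be discrete, so pointwise indices are insufficient and one must work with proper-map degrees on open subsets of $M$; this is the technical heart of the argument.

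For (2), discreteness of $f$ makes the semicontinuity statement apply at every $y \in N$, so $N \setminus E_f$ is open. The sharper bound is obtained by invoking the classical theorem that a discrete proper map of nonzero degree between orientable $n$-manifolds is open and its branch set $B_f \subset M$ has covering dimension $\le n-2$, so $f(B_f) \subset N$ does too. Off the closed set $f(B_f)$, the map $f$ is a local homeomorphism at each preimage, every preimage satisfies $|\mu(f,x)| = 1$, and the sum formula then yields $\ge |\deg f|$ essential preimages. Therefore $\ovl{E}_f \subset f(B_f)$, giving $\dim \ovl{E}_f \le n-2$. The deep input here is the open-map/branch-set theorem for discrete degree-nonzero maps in the topological category; once it is granted, the dimension bound is immediate.
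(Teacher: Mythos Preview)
First, a framing point: this paper does not itself prove Theorem~\ref{theoremCT}; it is quoted from \cite{CT} and then \emph{used} as a black box (see the last line of the proof of Proposition~\ref{prop2}). Section~2 generalizes the Church--Timourian machinery to maps $X\to M$ with $X$ not a manifold, and those arguments (Lemma~\ref{main}, Lemma~\ref{discrete}, Proposition~\ref{prop1}) are modelled on \cite{CT}, so that is the right comparison. Against that backdrop, your sketch has genuine gaps.

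\textbf{Part (1).} Your argument that $E_f$ has empty interior rests on ``producing a $y$ whose preimages all have local degree $\pm1$''. In the topological category there is no regular-value theorem, and this step is exactly the content of the result; as written it is circular. The actual mechanism (Lemma~2.6 of \cite{CT}, reproduced here as Lemma~\ref{main}) is a \emph{maximality} argument: inside a putative open $T\subset E_f$ pick $y$ with the maximal number of essential preimages, and show that over a small ball $B\ni y$ each essential component $V_x$ maps homeomorphically onto $\int B$. Then each $x$ is an $n$-Euclidean point with $|d_xf|\le 1$, and the sum formula gives $|\deg f|\le |f^{-1}(y)\cap X_f|<|\deg f|$, a contradiction. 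For the ``no closed $(n{-}1)$-dimensional subset'' clause, your first move (``by Hurewicz--Wallman, $A$ locally separates $N$ as $W_+\cup A\cup W_-$'') is not a theorem: a closed $(n{-}1)$-dimensional subset of an $n$-manifold need not locally disconnect it into two sides. The approach in \cite{CT} (and in Proposition~\ref{prop2} here) again runs through the maximality lemma applied to $A$, not through any separation argument.

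\textbf{Part (2).} The step ``a discrete proper map of nonzero degree between oriented $n$-manifolds is open, hence $\dim B_f\le n-2$'' is false as stated. For instance $f\colon\R^2\to\R^2$, $f(x,y)=(x^3-x,\,y)$, is proper, has finite fibres, has degree $1$, but fails to be open along the fold lines $x=\pm 1/\sqrt{3}$. The Church--V\"ais\"al\"a branch-set theorem needs openness as a hypothesis, so it cannot be invoked here. The correct route is the one in Lemma~\ref{discrete} (cf.\ Lemma~2.5 of \cite{CT}): when $f$ is discrete, lower semicontinuity of the essential count shows $E_f$ is \emph{closed}. Then part~(1) applies to $E_f$ itself: $E_f$ is a closed set containing no closed $(n{-}1)$-dimensional subset, so $\dim E_f\le n-2$, and $\ovl{E_f}=E_f$.
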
 

In the same paper, the authors generalized this theorem to manifolds which are not necessarily orientable replacing 
$|\deg f|$ by Hopf's absolute degree $A(f)$ (c.f., 5.13 \cite{CT}). 

We extend the notion of absolute degree and essentially deficient points for maps $f: X \to M$ into manifolds, with $X$ not necessarily a manifold. 
Then we prove a result, Theorem 2.11, which extends Theorem \ref{theoremCT}, where $X$ \red{no longer is assumed} to be  a manifold but it satisfies certain cohomological property.

In a study about multiple points of maps between manifolds,  D. L. Gon\c{c}alves proved the following.

\begin{theorem}[\cite{GonOre}, Theorem 2.2]  Let $M,N$ be manifolds of same dimension, $M$ closed. Let $[f]$  be the    homotopy class of  a    map $f:M \to N$.
\begin{itemize}
 \item[a$)$] If the absolute  degree of the map  $f$ is  $1$, then there is a   map $g\in [f]$  such 
 that the set of  multiple   points  is not dense in $M$.
 \item[b$)$]  If $A(f)\ne1$ then the set of multiple points of $f$ is dense.
  \end{itemize}

\end{theorem}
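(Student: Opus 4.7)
For part (a), the plan is to invoke the classical realizability theorem of Hopf (extended to the non-orientable case by Olum and later authors) stating that $A(f)=1$ implies the class $[f]$ admits a representative $g$ with a point $y_0\in N$ whose preimage $g^{-1}(y_0)=\{x_0\}$ consists of a single non-degenerate point; that is, $g$ is a local homeomorphism near $x_0$. Given such $g$, choose neighborhoods $W$ of $x_0$ and $V$ of $y_0$ with $g|_W\colon W\to V$ a homeomorphism. Since $g(M\setminus W)$ is compact (as $M$ is closed) and does not contain $y_0$, one can shrink to an open $V'\subset V$ disjoint from $g(M\setminus W)$. The set $g^{-1}(V')\subset W$ is then non-empty, open, and every point of it is a single point of $g$, so the multiple points of $g$ are not dense in $M$.

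For part (b), I argue by contradiction. Suppose the multiple points of $f$ are not dense. Then the interior $T$ of the set of single points is a non-empty open subset of $M$ on which $f$ is injective. By invariance of domain, $f|_T\colon T\to V:=f(T)$ is an embedding onto an open set $V\subset N$, and since each point of $T$ has a unique preimage one gets $f^{-1}(V)=T$, so $f|_T$ is actually a homeomorphism onto $V$.

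The argument then splits on the value of $A(f)$. If $A(f)\ge 2$, each $y\in V$ has exactly one preimage, essential because $f$ is a local homeomorphism there, which is strictly fewer than $A(f)$; hence $V\subseteq E_f$. Since $V$ is open in $N$, $\dim V=n$, contradicting the bound $\dim E_f\le n-1$ provided by Theorem \ref{theoremCT}. If instead $A(f)=0$, a local degree computation gives a contradiction: at each point of $T$ the local index of $f$ is $\pm 1$, and the unique preimage of any $y\in V$ lies in $T$, forcing $|\deg f|=1$ (or $\deg_{\Z/2}f=1$ in the non-orientable setting), which implies $A(f)\ge 1$ and contradicts $A(f)=0$.

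The main obstacle I anticipate is part (a): citing correctly the Hopf-type realization theorem producing $g$ with a single non-degenerate preimage, particularly in the non-orientable setting where one must work with the absolute degree $A(f)$ rather than the signed degree. The rest of the argument is essentially a local application of invariance of domain together with the dimension estimate of Theorem \ref{theoremCT}.
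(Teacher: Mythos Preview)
Your handling of part (a) matches the paper: both simply invoke Hopf's theorem (the paper cites it as \cite[Theorem~4.1]{Epstein}), and your remark that the non-orientable case requires the Olum--Epstein extension is exactly the right caveat.

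For part (b) you take a genuinely different route from the paper. The paper does not argue directly at all: it just records that (b) is a consequence of \cite[Lemma~2.1a and Theorem~3.1]{Epstein}, i.e.\ Epstein's characterization of the absolute degree as the geometric degree. (Later in the paper, Theorem~\ref{dominating} gives a self-contained argument in greater generality: assuming the multiple points are not dense, one shows in one stroke that $f_\ast\colon\pi_1 M\to\pi_1 N$ is onto and $0<A(f)\le k_f=1$, so $A(f)=1$.) Your case split into $A(f)\ge 2$ and $A(f)=0$ works, but it is heavier and less uniform: invoking the Church--Timourian bound $\dim E_f\le n-1$ to rule out $A(f)\ge 2$ is correct but brings in a much deeper theorem than the situation requires, whereas Epstein's results (or the argument of Theorem~\ref{dominating}) stay at the level of elementary degree theory.

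There is one genuine gap in your $A(f)=0$ case. You derive $|\deg f|=1$ from the local degree at the single preimage and then assert ``which implies $A(f)\ge 1$.'' This implication is true but not immediate: by definition $A(f)=0$ exactly when $j=[\pi_1 N:f_\ast\pi_1 M]$ is infinite or when $\deg\widetilde f=0$, so you must first rule out $j=\infty$. The missing step is the standard fact that a map of nonzero (twisted) degree between closed manifolds has $j<\infty$: if $j=\infty$ then the lift $\widetilde f\colon M\to\widetilde N$ lands in a compact subset of the non-compact manifold $\widetilde N$, hence misses a point, forcing $\deg\widetilde f=0$; factoring $f=p\circ\widetilde f$ then gives $\deg f=0$. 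Once you insert this, your argument goes through, though at that point you have essentially reproduced the covering argument in the proof of Theorem~\ref{dominating} and the appeal to Theorem~\ref{theoremCT} for the other case becomes superfluous.
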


The above theorem follows from properties of the Hopf's absolute degree found in the paper of Epstein \cite{Epstein}. Namely, item (a) follows from Hopf's theorem [\cite{Epstein}, Theorem 4.1] and item (b) follows from [\cite{Epstein}, Lemma 2.1a and Theorem 3.1].  We prove Theorem \ref{dominating} which generalizes the item (b) of the above theorem. In this theorem we consider maps from $X$ into $M$ for  certain CW-complex $X$, which are not necessarily manifold.
We also prove another result, Theorem \ref{theorem1}, for a different family of CW-complex, under different hypotheses. A family of complexes which are not manifolds satisfying hypotheses of Theorem \ref{theorem1} is explicitly given.

S. Orevkov gave in  \cite{GonOre} an example of a map $f: S^2 \to \R^2$ for which the set of single points (the complement of the set of multiple points) is dense in $S^2$.  We mimic his example   to show that in many cases one can construct maps where the set of single points is large, namely it is dense. See   Corollary \ref{corol4.6} and Remark 4.6.

The present work contains 3 sections besides the introduction. In section 2,  we estimate the dimension of the set of essentially deficient points. The main results of the section are Propositions 2.8, \ref{prop2} and Theorem \ref{maintheorem}.  Then, in section 3, we consider the study of multiple points, as well single points.  In this case, the results deal with the existence of maps where the set of multiple points is dense.  The main results are Theorem 3.1 and Theorem 3.3. Finally,  in section 4, we show that in many cases one can construct maps where the set of single points is dense. The main results are  Theorem 4.2 and Corollary 4.5.







\section{Deficient points}

In this section, we assume $X$ a connected,  locally path-connected, locally compact  $n$-dimensional paracompact Hausdorff space, 
$M$ an $n$-manifold  without boundary and $f: X \to M$  a proper map. 
By  $\G$ we denote  the orientation system (over $\Z$) on $M$ and by  $\G_f$  the local system on $X$  induced by $\G$ and $f$. 
We consider \u{C}ech cohomology groups with local coefficients and with compact carriers. 

\subsection{Local degree, essential points}

Suppose  that $x\in X$  is an isolated point of the set $f^{-1}(f(x))$,  
i.e., $x \notin \ovl{f^{-1}(f(x))\setminus \{x\}}$. 
Then there is a compact set $V \subset X$  such that  $x \in \int V$  and $V \cap f^{-1}(f(x)) = \{x\}$.

Consider the following diagram 

\begin{table}[h]
\centering
\begin{tabular}{c} \xymatrix
{
 H^{n} (X;\G_f)  &  H^n(X, X \setminus \{x\};\G_f) \ar[l]_{\hspace{-.6cm}H^{n}(i)} \ar[r]^{\hspace{-.2cm}H^{n}(j)} & 
H^n(V,V \setminus \{x\};\G_f |V)  \\
  & H^n(M;\G)   & \ar[l]^{\hspace{-.6cm}H^{n}(i')} H^n(M, M \setminus \{f(x)\};\G)  \ar[u]^{H^n(f|V)} 
}
\end{tabular}
\end{table}

\noindent 
where  $i$ and $i'$ are the inclusions, $f|V$ is the restriction of $f$ and $H^n(j)$ is the excision isomorphism.
Note that the composition $(H^n(j))^{-1} \circ H^n(f|V)$ does not depend on the choice of $V$.

Let  $g_M$ be a generator of $H^n(M;\G)$.
 The image 
$$ \text{d}_x f : =  \big( H^n(i) \circ (H^n(j))^{-1} \circ H^n(f|V) \circ  (H^n(i'))^{-1} \big) (g_M) \in  H^{n} (X;\G_f) $$
we call {\it the local twisted degree element} of $f$ at $x$. 

 We say that $x$ is {\it an essential point} of $f$  if  $\text{d}_x f$ is non-zero.
The set of essential points of $f$ is denoted by $X_f$.  

The following two lemmas are consequences of the excision and the Mayer-Vietoris sequence.

\begin{lemma}[cf., Lemma 2.3 \cite{CT}] \label{degree} 
Suppose  for some $y \in M$,  $f^{-1}(y)$ is discrete (so finite  since $f$ is proper).
Then
$$H^n(f)(g_M) = \sum_{x \in  f^{-1}(y)}  \text{d}_x f  =  \sum_{x \in  f^{-1}(y) \cap X_f }  \text{d}_x f   \ . $$
In particular, if $f^{-1}(y) \cap X_f = \{$x$\}$ then $H^n(f)(g_M) =  \text{d}_x f$. 
\end{lemma}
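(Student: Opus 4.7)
The plan is to reduce the global cohomology class $H^n(f)(g_M)$ to a local contribution around each preimage point, and then assemble those contributions via excision.

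Since $f$ is proper and $f^{-1}(y)$ is discrete, $f^{-1}(y) = \{x_1,\ldots,x_k\}$ is finite. Using that $X$ is locally compact Hausdorff, I would choose pairwise disjoint compact neighborhoods $V_i$ of $x_i$ with $V_i\cap f^{-1}(y)=\{x_i\}$, and set $V=V_1\cup\cdots\cup V_k$. Since $f^{-1}(y)\subset\operatorname{int}(V)$ and the $V_i$ are disjoint, excision together with additivity over disjoint components yields a natural isomorphism
$$H^n\bigl(X,\,X\setminus f^{-1}(y);\G_f\bigr) \;\cong\; \bigoplus_{i=1}^{k} H^n\bigl(V_i,\,V_i\setminus\{x_i\};\G_f|V_i\bigr),$$
whose $i$-th projection is precisely the excision isomorphism $H^n(j_i)$ appearing in the defining diagram of $\text{d}_{x_i}f$.

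Next, the restriction of $f$ to the pair $(X,X\setminus f^{-1}(y))\to(M,M\setminus\{y\})$, together with the inclusions $i'\colon(M,\emptyset)\hookrightarrow(M,M\setminus\{y\})$ and $i_X\colon(X,\emptyset)\hookrightarrow(X,X\setminus f^{-1}(y))$, produces by naturality a commutative square in cohomology. Setting $g_y:=(H^n(i'))^{-1}(g_M)$, which exists because $M$ is an $n$-manifold, I get $H^n(f)(g_M)=H^n(i_X)\bigl(H^n(f)(g_y)\bigr)$. By functoriality applied to each inclusion $(V_i,V_i\setminus\{x_i\})\hookrightarrow(X,X\setminus f^{-1}(y))$, the $i$-th component of $H^n(f)(g_y)$ under the decomposition above is exactly $H^n(f|V_i)(g_y)$. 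Transporting this through $(H^n(j_i))^{-1}$ and then applying $H^n(i)$ to reach $H^n(X;\G_f)$ recovers $\text{d}_{x_i}f$ by its very definition. Summing over $i=1,\ldots,k$ yields the first equality in the statement.

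The second equality is immediate: if $x\notin X_f$ then $\text{d}_x f=0$ by definition, so dropping such terms from the sum changes nothing; the "in particular" assertion is then the case of a single nonzero summand. The main obstacle is technical rather than conceptual: one must verify that the excision isomorphism and the disjoint-union splitting behave as expected in \u{C}ech cohomology with local coefficients and compact carriers. Once these are granted (this is presumably what the authors mean by "consequences of the excision and Mayer--Vietoris sequence"), the remainder is a clean diagram chase through the pentagonal diagram that defines $\text{d}_{x_i}f$.
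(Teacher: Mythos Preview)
Your argument is correct and matches the paper's own treatment: the paper gives no detailed proof of this lemma, stating only that it (and the next lemma) is a consequence of excision and the Mayer--Vietoris sequence, which is exactly the mechanism you spell out via the splitting of $H^n(X,X\setminus f^{-1}(y);\G_f)$ and the naturality square. One small notational point: the $i$-th projection of your splitting is the restriction $H^n(X,X\setminus f^{-1}(y))\to H^n(V_i,V_i\setminus\{x_i\})$, not literally the map $H^n(j_i)$ of the defining diagram (whose domain is $H^n(X,X\setminus\{x_i\})$); the extra identity needed---that $H^n(i_X)$ corresponds under the splitting to $\sum_i H^n(i)\circ(H^n(j_i))^{-1}$ on the summands---is precisely where Mayer--Vietoris (or an easy induction on $k$) enters, as you yourself flag at the end.
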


\begin{lemma}\label{local_degree}
Let $B$ be an $n$-ball in $M$ with $y \in \int B$, and $V$ a component of $f^{-1}(\int B)$ such that $f^{-1}(y)\cap V=\{x\}$.
Then, for any $z \in \int B$ with  $f^{-1}(z)\cap V$ finite  we have 
$$\text{d}_x f = \sum_{x' \in  f^{-1}(z) \cap V}  \text{d}_{x'} f  = 
 \sum_{x' \in  f^{-1}(z) \cap V \cap X_f }  \text{d}_{x'} f   $$ 
and, consequently,  $f^{-1}(z) \cap V \cap X_f \ne \emptyset$ if  $x$ is essential. 
\end{lemma}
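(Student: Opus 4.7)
The plan is to deduce the statement from Lemma~\ref{degree} applied to the restricted map $f|_V : V \to \int B$, and then to transport the resulting identity from $H^n(V;\G_f|_V)$ to $H^n(X;\G_f)$ via the natural extension map associated with the open inclusion $V\hookrightarrow X$.

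First I would verify that $V$ and $f|_V$ satisfy the standing hypotheses of Section~2. Because $X$ is locally path-connected, the component $V$ of the open set $f^{-1}(\int B)$ is itself open in $X$ and so inherits local path-connectedness, local compactness, paracompactness, Hausdorffness, and dimension at most $n$; and $\int B$ is an orientable $n$-manifold without boundary. Moreover $V$ is also closed in $f^{-1}(\int B)$, so for any compact $K\subset\int B$ the preimage $(f|_V)^{-1}(K)=f^{-1}(K)\cap V$ is closed in the compact set $f^{-1}(K)$, showing that $f|_V$ is proper.

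Next I would fix a generator $g_{\int B}\in H^n(\int B;\G|_{\int B})\cong\Z$ compatible with $g_M$, meaning that for every $w\in\int B$ the class $g_{\int B}$ corresponds, under excision, to $(H^n(i'_w))^{-1}(g_M)\in H^n(M,M\setminus\{w\};\G)$; such a choice exists because $\G|_{\int B}$ is trivial and the orientation of $M$ restricts to $\int B$. With this compatibility, Lemma~\ref{degree} applied to $f|_V$ at both $y$ and $z$, whose preimages in $V$ are discrete, yields
\[
\text{d}_x(f|_V)\;=\;H^n(f|_V)(g_{\int B})\;=\;\sum_{x'\in f^{-1}(z)\cap V}\text{d}_{x'}(f|_V)
\]
in $H^n(V;\G_f|_V)$.

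To finish, I would transfer this identity along the extension map $H^n(V;\G_f|_V)\to H^n(X;\G_f)$. For each $x'\in\{x,x_1,\ldots,x_k\}$, the same compact neighborhood $W\subset V$ can be used to define both $\text{d}_{x'}(f|_V)$ and $\text{d}_{x'}f$; then the commutativity of the square relating the two excision isomorphisms $H^n(W,W\setminus\{x'\};\G_f|_W)\cong H^n(V,V\setminus\{x'\};\G_f|_V)$ and $H^n(W,W\setminus\{x'\};\G_f|_W)\cong H^n(X,X\setminus\{x'\};\G_f)$ with the natural maps into $H^n(V;\G_f|_V)$ and $H^n(X;\G_f)$ identifies the image of $\text{d}_{x'}(f|_V)$ with $\text{d}_{x'}f$. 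Substituting gives $\text{d}_x f=\sum_{x'\in f^{-1}(z)\cap V}\text{d}_{x'}f$; the second equality is immediate since non-essential summands vanish by definition; and the final assertion follows because, if $x$ is essential, then $\text{d}_x f\neq 0$ forces at least one summand $\text{d}_{x'}f$ to be nonzero. The main technical obstacle will be this last compatibility step, where one must carefully verify that excision isomorphisms in compact-supports \v{C}ech cohomology commute with the extension map under the open inclusion $V\hookrightarrow X$.
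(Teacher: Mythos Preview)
Your argument is correct, but it proceeds along a different line from the paper's proof. The paper does not pass through $H^n(V;\G_f|_V)$ at all. Instead it works directly in $H^n(X;\G_f)$ via the pair $(X,X\setminus V)$: using the excision isomorphism $H^n(j_V):H^n(X,X\setminus V;\G_f)\to H^n(\ovl V,\ovl V\setminus V;\G_f|\ovl V)$ and the map $H^n(f|\ovl V)$ induced by $f|\ovl V:(\ovl V,\ovl V\setminus V)\to(M,M\setminus\int B)$, it establishes
\[
\text{d}_x f \;=\; \bigl(H^n(i_V)\circ (H^n(j_V))^{-1}\circ H^n(f|\ovl V)\circ (H^n(i_B))^{-1}\bigr)(g_M),
\]
and then observes that the right-hand side depends only on $V$ and $B$, not on the particular point $y$. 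Replacing $y$ by $z$ and decomposing yields the sum formula.

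The two approaches are essentially dual ways of expressing the same idea. Yours is more modular, since it reduces everything to a second application of Lemma~\ref{degree} and a transport step; the price is that you must introduce the extension-by-zero map $H^n_c(V;\G_f|_V)\to H^n_c(X;\G_f)$ and verify its compatibility with the local excision isomorphisms --- the ``technical obstacle'' you flag. The paper's approach avoids that extra map entirely by staying inside relative cohomology of $X$, at the cost of leaving the additivity step (the Mayer--Vietoris decomposition over the several points of $f^{-1}(z)\cap V$) to the reader. Either route works; the compatibility you worry about is indeed routine once one recognizes that both sides factor through $H^n(W,W\setminus\{x'\};\G_f|_W)$ for a small compact $W\subset V$.
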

\begin{proof}
Let  us consider the following diagram:

\begin{table}[h]
\centering
\begin{tabular}{c} \xymatrix
{
 H^{n} (X;\G_f)  &  H^n(X, X \setminus V;\G_f) \ar[l]_{\hspace{-.6cm}H^{n}(i_V)} \ar[r]^{\hspace{-.2cm}H^{n}(j_V)} & 
H^n(\ovl{V},\ovl{V} \setminus V;\G_f|\ovl{V}) \\
  & H^n(M;\G)   & \ar[l]^{\hspace{-.6cm}H^{n}(i_B)} H^n(M, M \setminus \int B;\G)  \ar[u]^{H^n(f|\ovl{V})}  \, ,
}
\end{tabular}
\end{table}

\noindent 
where  $H^{n} (i_V))$  and  $ H^{n}(i_B)$  are the homomorphisms induced by the inclusions, 
$H^{n} (j_V)$ is the excision isomorphism and $H^n(f|\ovl{V})$ is the homeomorphism induced by 
the restriction $f|\ovl{V}$ of $f$. 
One can show that 
$$ 
\text{d}_x f =  \big( H^n(i_V) \circ (H^n(j_V))^{-1} \circ H^n(f|\ovl{V}) \circ  (H^n(i_B))^{-1} \big) (g_M).
$$
The assertion of the lemma is  a consequence of the above equality. 
\end{proof}



\begin{remark}\label{rem_cov} If  $p: \widetilde{M} \to M$ is a covering 
and $\widetilde{f} : X \to  \widetilde{M}$ a lifting of $f$ then 
\begin{enumerate}
\item[{\rm(a)}] $X_f=X_ {\widetilde{f}}$,
\item[{\rm(b)}] the local system $\G_p$  induced by $\G$ and $p$ is an orientation  system  on $\widetilde{M}$,  
\item[{\rm(c)}] the local system  induced by  $\G_p$ and $\widetilde{f}$  
  coincides with the local system $\G_f$, and 
\item[{\rm(d)}] if $x \in X$ is an isolated point of $f^{-1}(f(x))$
then it is an  isolated point of $\widetilde{f}^{-1}(\widetilde{f}(x)) \subset X$ and 
$\text{d}_x f = \text{d}_x \widetilde{f}$.
\end{enumerate}
\end{remark}

\subsection{Essentially deficient points}

We say that a point  $y \in M$  is {\it essentially $k$-deficient}  if $f^{-1}(y)$ is discrete and has fewer than $k$ essential points.
The set of all such points we denote by $E_{k,f}$, i.e., 
$$E_{k,f} = \{  y \in M : f^{-1}(y)\ {\rm  is \  discrete \ and} \  |f^{-1}(y) \cap X_f| <k \}. $$  

\begin{remark}\label{max}
Let $T$ be an open subset in $M$ and suppose that $f^{-1}(y)$ is discrete for some $y \in T$.
 Then there exists an $n$-ball $B \subset T$ with $y \in \int B$ such that each component  of $f^{-1}(\int B)$ 
 contains at most one point of $f^{-1}(y)$. 
In this case,   the   component  of $f^{-1}(\int B)$  containing $x\in f^{-1}(y)$  we denote  by $V_x$. 
\end{remark}

The following lemma  is a modification of  Lemma 2.6 \cite{CT} and its proof  follows similar arguments.  

\begin{lemma}\label{main}
Let $T$ be open in $M$, and let $A \subset E_{k,f} \cap T$ be nonempty. 
Then there exists $y \in A$ such that, with the $B \subset T$ and $\{V_x\}_ {x \in f^{-1}(y)}$  as in Remark \ref{max},  
we have:
\begin{enumerate}
\item[{\rm(1)}] $ X_f \cap f^{-1} (A \cap \int B ) \subset  \bigcup_{x \in f^{-1}(y) \cap X_f} V_x$, 
\end{enumerate}
and for each $x \in f^{-1}(y) \cap X_f$
\begin{enumerate}
\item[{\rm(2)}]  the map $f_x : V_x \cap  X_f \cap f^{-1}(A) \to A \cap \int  B $ induced by $f$ 
is a homeomorphism, and
\item[{\rm(3)}] $V_x \cap  X_f \cap f^{-1}(A) $  is closed in $f^{-1}(A \cap \int B)$.   
\end{enumerate}
\end{lemma}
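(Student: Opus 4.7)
The plan is to pick $y \in A$ that \emph{maximizes} $|f^{-1}(y) \cap X_f|$ over $A$; this maximum exists because the quantity is a nonnegative integer bounded by $k-1$ on $A \subset E_{k,f}$. Write $m = |f^{-1}(y) \cap X_f|$ with essential preimages $x_1,\ldots,x_m$. Using Remark~\ref{max}, choose $B \subset T$ a closed $n$-ball with $y \in \int B$ so that distinct components of $f^{-1}(\int B)$ each meet $f^{-1}(y)$ in at most one point; properness of $f$ makes $f^{-1}(B)$ compact, so each $\ovl{V_x}$ is compact.

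For (1) I would prove a strengthening: every $y' \in A\cap \int B$ has exactly $m$ essential preimages, one in each $V_{x_i}$ with $x_i$ essential, and none elsewhere. Since $f^{-1}(y')$ is closed and discrete in $X$ and $\ovl{V_{x_i}}$ is compact, $V_{x_i}\cap f^{-1}(y')$ is finite, so Lemma~\ref{local_degree} applies inside $V_{x_i}$ and the nonvanishing of $\text{d}_{x_i}f$ forces at least one essential preimage of $y'$ in each of the $m$ pairwise disjoint essential components $V_{x_i}$. This gives $|f^{-1}(y')\cap X_f|\ge m$, while the maximality of $m$ gives the reverse inequality; equality forces all essential preimages to lie in $\bigcup_{x_i \text{ ess}} V_{x_i}$ with exactly one per component, proving (1) and also that each $f_x$ in (2) is a bijection.

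The main obstacle is showing the inverse $\sigma : A\cap\int B \to V_x\cap X_f\cap f^{-1}(A)$ is continuous (needed for (2)). Given $y'_n\to y'$ in $A\cap\int B$, compactness of $\ovl{V_x}$ extracts a subsequential limit $\sigma(y'_n)\to x^*\in V_x$ with $f(x^*)=y'$. If $x^*\ne \sigma(y')$, I would take disjoint small open neighborhoods of $\sigma(y')$ and $x^*$ inside $V_x$ and, using Remark~\ref{max} applied at $y'$, extract disjoint components $W', W^{*\prime}\subset V_x$ of $f^{-1}(\int B')$ for a small ball $B'\subset \int B$ around $y'$. Lemma~\ref{local_degree} applied to $W'$ at the essential point $\sigma(y')$ then yields an essential preimage of $y'_n$ in $W'$ for large $n$, while $\sigma(y'_n)\in W^{*\prime}$ is a second essential preimage of $y'_n$ in $V_x$, contradicting the uniqueness already established in (1). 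Hence $x^*=\sigma(y')$, and continuity follows by the usual subsequence argument.

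Claim (3) then follows easily: if $x_n\in V_x\cap X_f\cap f^{-1}(A)$ converges to $x^*\in f^{-1}(A\cap\int B)$, then since $V_x$ is closed in $f^{-1}(\int B)$ we have $x^*\in V_x$; setting $y'_n=f(x_n)\to y^*=f(x^*)\in A\cap\int B$ and applying the continuity of $\sigma$ from (2), $x_n=\sigma(y'_n)\to\sigma(y^*)$, so $x^*=\sigma(y^*)\in V_x\cap X_f\cap f^{-1}(A)$. I expect the "two distinct essential preimages" extraction in (2) to be the one genuinely delicate point; the rest is bookkeeping around Lemma~\ref{local_degree}, Remark~\ref{max}, and properness.
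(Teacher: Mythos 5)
Your selection of $y$ maximizing the number of essential preimages, and your counting argument for part (1) --- at least one essential preimage of each $y' \in A \cap \int B$ in each essential component $V_{x_i}$ via Lemma \ref{local_degree}, then maximality forcing exactly one per essential component and none elsewhere --- is exactly the paper's argument, including the finiteness bookkeeping via properness of $f$. Where you genuinely diverge is in (2) and (3). The paper proves continuity of the inverse $g_x$ directly: given a neighborhood $W$ of $g_x(z)$ in $V_x$, it chooses a small open ball $D \subset \int B$ containing $z$ whose preimage component $V$ through $g_x(z)$ lies in $W$, and Lemma \ref{local_degree} then shows that the (unique) essential preimage of every $z' \in A \cap D$ already lies in $V$, i.e.\ $g_x(A \cap D) \subset W$; no compactness or limit extraction is needed. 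For (3) the paper simply observes that $g_x \circ f$ is a retraction of $f^{-1}(A \cap \int B)$ onto $V_x \cap X_f \cap f^{-1}(A)$, and a retract of a Hausdorff space is closed --- a one-line replacement for your limit argument. Your contradiction scheme for (2) (a limit point $x^* \ne \sigma(y')$ in $\ovl{V_x}$, two disjoint components of $f^{-1}(\int B')$ furnished by Remark \ref{max} applied at $y'$, each eventually carrying an essential preimage of $y'_n$, contradicting the uniqueness from (1)) is sound in substance and is a legitimate alternative, though it buys nothing over the direct argument.

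The one genuine defect is your use of sequences. The standing hypotheses of Section 2 make $X$ only locally compact, locally path-connected, paracompact and Hausdorff --- not metrizable, and not even first countable. So ``compactness of $\ovl{V_x}$ extracts a subsequential limit'' is false as stated: compactness yields cluster points and convergent subnets, not convergent subsequences. Likewise, in (3) sequential closedness does not imply closedness in such a space. Both steps are repaired essentially verbatim by replacing sequences with nets: take $x^*$ a cluster point of the net $\sigma(y'_\alpha)$ in the compact set $\ovl{V_x}$, and note that $x^* \in \ovl{V_x} \cap f^{-1}(\int B) = V_x$ because components of open sets are open ($X$ being locally connected). After this substitution your argument goes through; alternatively, adopting the paper's direct continuity proof and the retraction trick avoids limit arguments altogether.
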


\begin{proof} 
 For each $y \in A$, $ f^{-1}(y)$ is finite and has $m(y)$ 
essential points, where $m(y) < k$. 
Choose $y$ such that $m(y)$ is maximal. 
Let $B \subset T$ and $ \{V_x\}_{x \in f^{-1}(y)}$  be as given by \ref{max}. 
Then, by Lemma \ref{local_degree}, for each  $x \in f^{-1}(y) \cap X_f$ and for each $z \in A \cap \int B$, $f^{-1}(z)  \cap V_x$ has at least one essential point.  
Thus, by the maximality property of $y$, we obtain 
\begin{enumerate}
\item[{\rm (a)}] if  $x \in f^{-1}(y) \cap X_f$  and  $z \in A \cap \int B$  then $f^{-1}(z)  \cap V_x$ 
has exactly one essential point, and 
\item[{\rm (b)}]  if  $x \in f^{-1}(y) \setminus X_f$ then $V_x \cap X_f = \emptyset$.
\end{enumerate}
Observe that (b) implies (1). 

For each $x \in f^{-1}(y) \cap X_f$,   the function  $f_x : V_x \cap  X_f \cap f^{-1}(A) \to  A \cap \int  B$ in (2) 
is continuous and by (a),  bijective . 
Now, we show that  its inverse $g_x$ is continuous. 
Take $z \in A \cap \int B$  and an arbitrary open neighborhood $W$ of $g_x(z)$ in $V_x$.
Then there exists an open $n$-ball  $D \subset \int B$ with $z \in  D$ such that the component $V$ of $f^{-1}( D)$ 
containing $g_x(z)$  is contained in $W$. 
Since $g_x(z) \in X_f$, then one can observe that the homomorphism 
$$
  H^n(V, V \setminus \{g_x(z)\};\G_f|V) \leftarrow  H^n(D, D \setminus \{z\};\G|D)
$$
induced by $f|V$  is nontrivial.  Thus $f(V)=\int D$.  
Thus $g_x(A \cap \int D) = V \cap X_f \cap f^{-1}(A) \subset  W \cap X_f \cap f^{-1}(A)$. 
It follows that $g_x$ is continuous, which proves (2).  
 
By (2) there is a retraction of $f^{-1}(A \cap \int B)$ onto   $V_x \cap  X_f \cap f^{-1}(A)$, so (3) follows. 
\end{proof}

\begin{corol}\label{cor1}
Suppose that $\dim E_{k,f}=n$ and let $T$ be an open nonempty  subset of $M$ contained in $ E_{k,f}$. 
Then there exists $y \in T$ such that, with the $B \subset T$ and $\{V_x\}_ {x\in f^{-1}(y)}$  as in Remark \ref{max}, 
we have:

\begin{enumerate}
\item[{\rm(1)}] $ X_f \cap f^{-1}  ( \int B ) \subset  \bigcup_{x \in f^{-1}(y) \cap X_f} V_x$,
\end{enumerate}
and for each $x \in f^{-1}(y) \cap X_f$
\begin{enumerate}
\item[{\rm(2)}]  $f_x : V_x \cap  X_f  \to \int  B $ induced by $f$ is a homeomorphism, and 
\item[{\rm(3)}]  $V_x \cap  X_f $  is closed in $V_x$. 
\end{enumerate}
\end{corol}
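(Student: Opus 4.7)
The plan is to derive this corollary as a direct specialization of Lemma \ref{main} with the choice $A := T$. Since $T$ is itself a nonempty subset of $E_{k,f} \cap T$, the hypotheses of Lemma \ref{main} are satisfied. Applying the lemma produces a point $y \in T$, a ball $B \subset T$ with $y \in \int B$, and components $\{V_x\}_{x \in f^{-1}(y)}$ as in Remark \ref{max}, verifying the three conclusions of the lemma with $A = T$.

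The remaining work is to translate those conclusions from their relative form (involving $A \cap \int B$ and $f^{-1}(A)$) into the simpler form stated in the corollary. Two observations drive the translation: first, since $B \subset T = A$, we have $A \cap \int B = \int B$; second, each component $V_x$ satisfies $V_x \subset f^{-1}(\int B) \subset f^{-1}(T) = f^{-1}(A)$, so $V_x \cap X_f \cap f^{-1}(A) = V_x \cap X_f$. With these substitutions, conclusion (1) of the lemma becomes (1) of the corollary, and the lemma's homeomorphism $V_x \cap X_f \cap f^{-1}(A) \to A \cap \int B$ becomes the desired $f_x : V_x \cap X_f \to \int B$ in (2).

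For (3), the lemma gives that $V_x \cap X_f$ is closed in $f^{-1}(\int B)$; since $V_x \subset f^{-1}(\int B)$, intersecting a closed subset of $f^{-1}(\int B)$ with $V_x$ yields a closed subset of $V_x$, so $V_x \cap X_f$ is closed in $V_x$. No substantive obstacle is expected: the corollary is essentially a clean reformulation of Lemma \ref{main} under the stronger hypothesis that $E_{k,f}$ has full dimension, which lets one take $A = T$ and absorb the extra intersections with $f^{-1}(A)$. The hypothesis $\dim E_{k,f} = n$ plays no further role beyond guaranteeing such an open $T$ can exist; the only point requiring a little care is tracking the chain of inclusions $V_x \subset f^{-1}(\int B) \subset f^{-1}(T)$ that drives each simplification.
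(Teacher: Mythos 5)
Your proposal is correct and is essentially the paper's own (implicit) argument: the paper states Corollary~\ref{cor1} without proof precisely because it is the specialization $A = T$ of Lemma~\ref{main}, and your simplifications are all valid --- $A \cap \int B = \int B$ since $B \subset T = A$, $V_x \cap X_f \cap f^{-1}(A) = V_x \cap X_f$ since $V_x \subset f^{-1}(\int B) \subset f^{-1}(T)$, and closedness of $V_x \cap X_f$ in $f^{-1}(\int B)$ passes to closedness in the subspace $V_x$. You also correctly identify that the hypothesis $\dim E_{k,f} = n$ serves only to guarantee (via the fact that an $n$-dimensional subset of an $n$-manifold has nonempty interior) that such a $T$ exists, which is exactly how the corollary is invoked in the proof of Proposition~\ref{prop1}.
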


\begin{lemma}[cf., Lemma 2.5 \cite{CT}] \label{discrete}
If $f$ is discrete then $E_{k,f}$ is closed.  
\end{lemma}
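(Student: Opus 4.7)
\medskip

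\textbf{Proof plan.} I would prove the lemma by showing that the complement $M \setminus E_{k,f}$ is open. Since $f$ is discrete by hypothesis, every fibre $f^{-1}(y)$ is discrete, so the definition of $E_{k,f}$ simplifies to
\[
E_{k,f} = \{y \in M : |f^{-1}(y) \cap X_f| < k\},
\]
and consequently $M \setminus E_{k,f} = \{y \in M : |f^{-1}(y) \cap X_f| \ge k\}$.

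Fix an arbitrary $y_0 \in M \setminus E_{k,f}$, so that $f^{-1}(y_0) \cap X_f$ contains at least $k$ distinct essential points $x_1, \ldots, x_k$. Because $f$ is proper and discrete, $f^{-1}(y_0)$ is compact and discrete, hence finite. Therefore Remark \ref{max} applies (with $T = M$): there exists an $n$-ball $B \subset M$ with $y_0 \in \operatorname{int} B$ such that each component of $f^{-1}(\operatorname{int} B)$ meets $f^{-1}(y_0)$ in at most one point. In particular, the components $V_{x_1}, \ldots, V_{x_k}$ of $f^{-1}(\operatorname{int} B)$ containing $x_1, \ldots, x_k$ respectively are pairwise distinct, and hence pairwise disjoint.

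Now I would invoke Lemma \ref{local_degree} on each $V_{x_i}$. For any $z \in \operatorname{int} B$, discreteness and properness of $f$ make $f^{-1}(z)$ finite, so in particular $f^{-1}(z) \cap V_{x_i}$ is finite; since $x_i$ is essential, the lemma yields $f^{-1}(z) \cap V_{x_i} \cap X_f \ne \emptyset$. Combining this over $i = 1, \ldots, k$ and using disjointness of the $V_{x_i}$, I conclude
\[
|f^{-1}(z) \cap X_f| \;\ge\; \sum_{i=1}^{k} |f^{-1}(z) \cap V_{x_i} \cap X_f| \;\ge\; k
\]
for every $z \in \operatorname{int} B$. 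Thus $\operatorname{int} B \subset M \setminus E_{k,f}$, which exhibits $M \setminus E_{k,f}$ as open and completes the proof.

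There is no real obstacle to overcome here: the work has already been absorbed into Remark \ref{max} and Lemma \ref{local_degree}, and the only subtlety is noting that discreteness of $f$ (together with properness) guarantees that $f^{-1}(y_0)$ is finite, so that we may simultaneously isolate all $k$ essential preimages in pairwise disjoint components.
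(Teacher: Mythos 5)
Your proof is correct and follows essentially the same route as the paper's: both pass to the complement, use discreteness to choose an $n$-ball $B$ whose preimage components each contain at most one point of $f^{-1}(y_0)$, and apply Lemma \ref{local_degree} to each component containing an essential point to conclude $|f^{-1}(z)\cap X_f|\ge k$ for all $z\in \operatorname{int} B$. Your explicit remarks that properness plus discreteness makes the fibres finite (so the hypothesis of Lemma \ref{local_degree} holds) and that the components $V_{x_i}$ are pairwise disjoint are details the paper leaves implicit, but the argument is the same.
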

\begin{proof}  Take an arbitrary $y \in M \setminus E_{k,f}$. 
Since $f$ is discrete, there is an $n$-ball $B$ in $M$ with $y \in \int B$ and such that each component of $f^{-1} (\int B)$ contains at most one point of $f^{-1}(y)$.
By $V_x$ we denote the component of $f^{-1}(\int B)$ containing $x\in f^{-1}(y)$. 
By Lemma \ref{local_degree}, given $z\in \int B$, for each essential point $x\in f^{-1}(y)$, $f^{-1}(z)\cap V_x \cap X_f \neq \emptyset$. 
Hence, for each $z\in \int B$, $|f^{-1}(z) \cap X_f| \ge |f^{-1}(y) \cap X_f| \ge k$, so  $\int B \subset M \setminus E_{k,f}$. 
Thus  $ M \setminus E_{k,f}$ is open, which proves the lemma. 
\end{proof}

\subsection{Twisted degree} \label{twist}

From now, we additionally assume  that  $H^n(X;\G_f)$ is either an infinite cyclic group or a cyclic group of order 2. 

Let  $g{_X}$ be a fixed generator of $H^n(X;\G_f)$.
Then {\it the twisted degree} of $f$  is the integer $\deg f$ such that 
$$(H^n(f))(g_M) =( \deg f) g_X  ,$$ where in case  $H^n(X;\G_f)$ is the cyclic group of order 2, we let $\deg f$ be $0$ or $1$.

For $x \in X$, let  $k_x$ be the index  of the image of  $H^{n}(i)$ in the group $ H^{n} (X;\G_f)$,  
where  $H^{n}(i): H^n(X, X \setminus \{x\};\G_f) \to H^{n} (X;\G_f)$ is the homomorphism induced by the inclusion,  if such homomorphism is non-trivial; otherwise, let $k_x=0$.

Then, by $k_f$ we denote  the maximum of $k_x$  over all   $x \in X$.
\medskip


We also assume that $p: \widetilde{M} \to M$ is a covering  with a finite number $j$ of sheets and 
that $\widetilde{f} : X \to  \widetilde{M}$ is a lifting of $f$.  

\begin{lemma}\label{lemma_cov}
Suppose  $k_f>0$ and let $y \in E_{k,f}$, where  $k = j \cdot |\deg {\widetilde{f}}| / k_f$. 
Then there exists $x \in f^{-1}(y) \cap X_f$ such that $|\text{d}_x f| > k_f$. 
\end{lemma}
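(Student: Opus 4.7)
The plan is to sum the local twisted degree elements over all of $f^{-1}(y)$ by exploiting the covering $p$, obtain an equality whose total equals $j(\deg \widetilde{f})g_X$, and then extract a large summand by averaging against the index $k_f$. In effect, Lemma~\ref{degree} plus Remark~\ref{rem_cov} provides the key accounting identity, after which elementary arithmetic finishes the job.

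First I would lift the count to $\widetilde{M}$. Since $p$ is a $j$-sheeted cover, $p^{-1}(y)$ has $j$ elements and $f^{-1}(y) = \bigsqcup_{\widetilde{y}\in p^{-1}(y)} \widetilde{f}^{-1}(\widetilde{y})$; discreteness of $f^{-1}(y)$ makes each $\widetilde{f}^{-1}(\widetilde{y})$ discrete as well. By Remark~\ref{rem_cov}(a),(d) the sets of essential points agree, $X_f = X_{\widetilde{f}}$, and $\text{d}_x f = \text{d}_x \widetilde{f}$ at every point in the fiber. Applying Lemma~\ref{degree} to $\widetilde{f}$ at each $\widetilde{y}\in p^{-1}(y)$ and adding the $j$ resulting identities yields
\[ \sum_{x\in f^{-1}(y)\cap X_f}\text{d}_x f \;=\; j\cdot H^n(\widetilde{f})(g_{\widetilde{M}}) \;=\; j(\deg\widetilde{f})g_X, \]
where Remark~\ref{rem_cov}(b),(c) is what lets me interpret the twisted degree of $\widetilde{f}$ as an integer multiple of the same generator $g_X$ of $H^n(X;\G_f)$.

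Next I would pass to integer coefficients. Write $\text{d}_x f = m_x g_X$. By definition $\text{d}_x f$ lies in the image of $H^n(i)\colon H^n(X,X\setminus\{x\};\G_f)\to H^n(X;\G_f)$, whose index is $k_x$, so $k_x\mid m_x$ and hence $|m_x|\geq k_x\geq 1$ for every $x\in X_f$. Let $N=|f^{-1}(y)\cap X_f|$. The hypothesis $y\in E_{k,f}$ gives $N<k=j|\deg\widetilde{f}|/k_f$; we may assume $\deg\widetilde{f}\neq 0$ (otherwise $k=0$ and the hypothesis is vacuous), so $N\geq 1$. The triangle inequality applied to the identity from Step~1 then delivers
\[ \sum_{x\in f^{-1}(y)\cap X_f}|m_x| \;\geq\; j|\deg\widetilde{f}|, \]
so some summand satisfies $|m_x| \geq j|\deg\widetilde{f}|/N > j|\deg\widetilde{f}|/k = k_f$, which is the required $|\text{d}_x f| > k_f$.

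The main obstacle is not conceptual but bookkeeping: checking that Lemma~\ref{degree} actually applies to $\widetilde{f}$ (properness of $\widetilde{f}$ follows from properness of $f$ and finiteness of $p$, and its fibers over $p^{-1}(y)$ are discrete), and that the twisted degree of $\widetilde{f}$ is meaningfully computed in the same cyclic group with the same generator $g_X$ as for $f$, which is precisely the content of Remark~\ref{rem_cov}(b),(c). The $\Z/2$-coefficient case is degenerate — then $k_f\in\{0,1\}$ while $|\text{d}_x f|\leq 1$, so the substantive content is the $\Z$ case treated above.
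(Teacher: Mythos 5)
Your proposal is correct and takes essentially the same route as the paper's own proof: decompose $f^{-1}(y)$ over the $j$ sheets of $p$, apply Lemma~\ref{degree} to $\widetilde{f}$ at each point of $p^{-1}(y)$, combine the $j$ resulting identities to bound $\sum_{x \in f^{-1}(y)\cap X_f} |\text{d}_x \widetilde{f}|$ below by $j\cdot|\deg \widetilde{f}|$, pigeonhole using $|f^{-1}(y)\cap X_f|\cdot k_f < k\cdot k_f = j\cdot|\deg\widetilde{f}|$, and finish with $\text{d}_x f = \text{d}_x \widetilde{f}$ from Remark~\ref{rem_cov}(d). Your additions --- the divisibility observation $k_x \mid m_x$, the explicit properness check for $\widetilde{f}$, and the aside on the order-two and $\deg\widetilde{f}=0$ cases --- are harmless extras that the paper leaves implicit (it simply sums the per-sheet triangle inequalities rather than the equalities, which is the same computation).
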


\begin{proof}
Let $p^{-1}(y)=\{ {\widetilde{y}}_1, \cdots,  {\widetilde{y}}_j\}$. 
Then ($\ast$) 
$ 
f^{-1}(y)= \bigcup_{i\in\{1, \ldots , j\}}   {\widetilde{f}}^{-1}( {\widetilde{y}}_i)  
$ 
and each  ${\widetilde{f}}^{-1}( {\widetilde{y}}_i)$ is finite as $ f^{-1}(y)$ is finite.
\smallskip

By Lemma \ref{degree}, for each $i \in \{1, \ldots , j\}$ we have
$$
|\deg   \widetilde{f}|= 
\Big|\sum_{x \in  \widetilde{f}^{-1}( {\widetilde{y}}_i) \cap X_{{\widetilde{f}} } }  \text{d}_x {\widetilde{f}} \Big|  \le 
  \sum_{x \in  {{\widetilde{f}}}^{-1}( {\widetilde{y}}_i) \cap X_{{\widetilde{f}}} }  |\text{d}_x {\widetilde{f}}| \ . 
$$
Thus, by ($\ast$) and  Remark \ref{rem_cov} (a),  
$ j \cdot |\deg   {\widetilde{f}}| \le 
  \sum_{x \in  f^{-1}( y) \cap X_f}  |\text{d}_x {\widetilde{f}}|$. 
Consequently, as  $k = j \cdot |\deg {\widetilde{f}}| / k_f$ and $y \in E_{k,f}$, we obtain   
$$
 |f^{-1}( y) \cap X_f| \cdot  k_f  < 
  \sum_{x \in  f^{-1}( y) \cap X_f}  |\text{d}_x {\widetilde{f}}| \ . 
$$
So, $ k_f < |\text{d}_x {\widetilde{f}}| $ for some $ x \in  f^{-1}( y) \cap X_f $ and the conclusion follows since $d_xf = d_x \tilde{f}$ by Remark \ref{rem_cov} (d).
\end{proof}

\subsection{Dimensional properties of the set $E_{k,f}$ }

\begin{dfn} We say that a point of  a space  $X$ is $n${\it -Euclidean} if it has a neighborhood in $X$ homeomorphic to $\R^n$. By $\mathcal{E}_n(X)$ we denote the set of all $n$-Euclidean points of $X$.
Note that  $\E_n(X)$ is an open subset  of $X$. 

\end{dfn}

Let $k_f$ be positive  and  let $k := j \cdot |\deg {\widetilde{f}}| / k_f$.

\begin{prop} [cf., Proposition 2.7 \cite{CT}]\label{prop1}
 If $\dim (X \setminus \E_n(X)) < n$ then  $\text{dim} \, E_{k,f} \leq n-1$.
\end{prop}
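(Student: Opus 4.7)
The plan is to argue by contradiction: suppose $\dim E_{k,f} = n$. Since $M$ is an $n$-manifold, $E_{k,f}$ must then contain a nonempty open subset $T \subset M$. Applying Corollary~\ref{cor1} to $T$ furnishes $y \in T$, an $n$-ball $B \subset T$, and components $\{V_x\}_{x \in f^{-1}(y)}$ of $f^{-1}(\int B)$ for which each induced map $f_x : V_x \cap X_f \to \int B$ (with $x \in f^{-1}(y) \cap X_f$) is a homeomorphism. Since $k_f > 0$ and $y \in E_{k,f}$ with $k = j \cdot |\deg \widetilde{f}|/k_f$, Lemma~\ref{lemma_cov} supplies an essential preimage $x^* \in f^{-1}(y) \cap X_f$ with $|\text{d}_{x^*} f| > k_f$. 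My strategy is to transport this lower bound to a well-chosen Euclidean essential point and then bound the twisted local degree there from above by $k_{x'} \le k_f$, producing a contradiction.

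For the first step, I observe that $V_{x^*} \cap X_f \cong \int B \cong \R^n$ has covering dimension $n$, so by the hypothesis $\dim(X \setminus \E_n(X)) < n$ and monotonicity of dimension, $V_{x^*} \cap X_f$ must meet $\E_n(X)$. Pick $x' \in V_{x^*} \cap X_f \cap \E_n(X)$ and set $y' := f(x') \in \int B$. Since $f$ is proper and $f^{-1}(y')$ is discrete, $f^{-1}(y')$ is finite; the bijectivity of $f_{x^*}$ forces $V_{x^*} \cap X_f \cap f^{-1}(y') = \{x'\}$. Hence Lemma~\ref{local_degree}, applied to the ball $B$, the component $V_{x^*}$, and the point $z = y'$, gives $\text{d}_{x^*} f = \text{d}_{x'} f$, so $|\text{d}_{x'} f| > k_f$.

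For the upper bound, choose a Euclidean neighborhood $W$ of $x'$ in $X$ with $W \subset V_{x^*}$, and set $W' := W \cap X_f = W \cap V_{x^*} \cap X_f$. Then $f_{x^*}$ restricts to a homeomorphism between $W'$ and the open set $f(W') \subset \int B$, so $W'$ is a boundaryless $n$-manifold topologically embedded in $W \cong \R^n$. Invariance of domain then shows $W'$ is open in $W$, hence open in $X$, and $f|W' : W' \to f(W')$ is a homeomorphism onto an open subset of $M$. Select a compact $V \subset W'$ with $x' \in \int V$ (say, a closed ball in $W'$ centered at $x'$); then $f|V$ is a homeomorphism onto a compact neighborhood $f(V)$ of $y'$, and $V \cap f^{-1}(y') = \{x'\}$, so this $V$ may be used in the defining diagram of $\text{d}_{x'} f$.

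Since $f|V$ is a homeomorphism and $y'$ lies in the interior of $f(V)$, the homomorphism $H^n(f|V) : H^n(M, M \setminus \{y'\}; \G) \to H^n(V, V \setminus \{x'\}; \G_f|V)$ is an isomorphism: it factors as the excision isomorphism $H^n(M, M \setminus \{y'\}; \G) \cong H^n(f(V), f(V) \setminus \{y'\}; \G|f(V))$ followed by the iso induced by the pair-homeomorphism $f|V$. Tracing a generator through the definition of $\text{d}_{x'} f$ and using that the image of $H^n(i) : H^n(X, X \setminus \{x'\}; \G_f) \to H^n(X; \G_f)$ has index $k_{x'}$ by definition, one concludes $\text{d}_{x'} f = \pm k_{x'} g_X$, and in particular $|\text{d}_{x'} f| = k_{x'} \le k_f$, contradicting the lower bound above. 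The main obstacle is the invariance-of-domain step showing that $W'$ is genuinely open in $X$, which is what reduces the local cohomological computation at $x'$ to that of a homeomorphism.
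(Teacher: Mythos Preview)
Your argument is correct and follows essentially the same route as the paper's proof: assume $\dim E_{k,f}=n$, apply Corollary~\ref{cor1}, use the dimension hypothesis to locate a Euclidean essential point $x'$ in some $V_x\cap X_f$, transfer the local degree via Lemma~\ref{local_degree}, and contradict Lemma~\ref{lemma_cov}. The only cosmetic difference is the order in which you invoke Lemma~\ref{lemma_cov} (you single out $x^*$ first, while the paper bounds $|d_xf|\le k_f$ for every $x\in f^{-1}(y)\cap X_f$ and then contradicts the lemma); your invariance-of-domain justification for why $|d_{x'}f|=k_{x'}$ makes explicit a step the paper leaves to the reader.
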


\begin{proof}
Suppose $\text{dim} \, E_{k,f} =n$. Then there is an open subset $T$ of $M$ contained in $E_{k,f}$. 
Let $y \in T$,  $B \subset T$ and $\{V_x\}_{x \in f^{-1}(y)}$ be given as in  
Corollary \ref{cor1}. 

By (2) \ref{cor1}, for each $x \in f^{-1}(y) \cap X_f$, $f$ maps homeomorphically $V_x \cap X_f$ onto the open $n$-ball $\int B$. 
Since  $\dim (X \setminus \E_n(X)) < n$,  it follows that $V_x \cap X_f \cap \E_n(X)$ 
is a non-empty open subset of $V_x \cap X_f \approx \int B$. 
It follows that any $x'\in V_x \cap X_f \cap \E_n(X)$, we have $|d_{x'}f| \le k_f$. 
Note  also that, $f^{-1}(f(x'))\cap V_x \cap X_f = \{x'\}$,  so $d_x f=d_{x'} f$, by Lemma \ref{local_degree}.
Thus $|d_{x}f| \le k_f$  for each $x \in f^{-1}(y) \cap X_f$, which contradicts  \ref{lemma_cov}. 
So,  $\text{dim} \, E_{k,f} \le n-1$. 
\end{proof}

\begin{prop} \label{prop2}
$ E_{k,f} $ does not contain a closed (in $M$) subset $F$ of dimension $n-1$ and disjoint with $\ovl{f(X \setminus \E_n(X))}$. 
\end{prop}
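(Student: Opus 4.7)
The plan is to argue by contradiction, following the pattern of the proof of Proposition \ref{prop1} but invoking Lemma \ref{main} in place of Corollary \ref{cor1}. Assume $F \subset E_{k,f}$ is closed in $M$, has $\dim F = n-1$, and is disjoint from $\ovl{f(X \setminus \E_n(X))}$. Set $T := M \setminus \ovl{f(X \setminus \E_n(X))}$; this is an open subset of $M$ containing $F$. Applying Lemma \ref{main} with this $T$ and $A := F$ produces $y \in F$, an $n$-ball $B \subset T$ with $y \in \int B$, and the components $\{V_x\}_{x \in f^{-1}(y)}$ satisfying conditions (1)--(3).

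The crucial new step is to observe that $V_x \subset \E_n(X)$ for every $x \in f^{-1}(y)$: since $V_x \subset f^{-1}(T)$, one has $f(V_x) \subset T$, which is disjoint from $f(X \setminus \E_n(X))$, so no point of $V_x$ can lie in $X \setminus \E_n(X)$. This is where the disjointness hypothesis on $F$ enters, playing the role analogous to $\dim(X \setminus \E_n(X)) < n$ in Proposition \ref{prop1}.

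Now fix any $x \in f^{-1}(y) \cap X_f$. The set $F \cap \int B$ is open in $F$ and contains $y$, hence is nonempty and of dimension $n-1$; by Lemma \ref{main}(2) we may choose $x' \in V_x \cap X_f \cap f^{-1}(F)$. Because $x' \in \E_n(X) \cap X_f$, the bound $|d_{x'} f| \le k_f$ established in the proof of Proposition \ref{prop1} applies. The injectivity of $f_x$, together with $f(x') \in F$, forces $f^{-1}(f(x')) \cap V_x \cap X_f = \{x'\}$, and $f^{-1}(f(x'))$ is finite since $f(x') \in F \subset E_{k,f}$ and $f$ is proper. Lemma \ref{local_degree} then yields $d_x f = d_{x'} f$, whence $|d_x f| \le k_f$. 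This bound, holding for every $x \in f^{-1}(y) \cap X_f$, directly contradicts Lemma \ref{lemma_cov}, which furnishes some $x$ in this set with $|d_x f| > k_f$.

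I expect the main subtlety to be the identification $f^{-1}(f(x')) \cap V_x \cap X_f = \{x'\}$ which allows Lemma \ref{local_degree} to transfer the local-degree bound from the Euclidean witness $x'$ back to the originally chosen $x$; the remainder is a direct adaptation of the argument in Proposition \ref{prop1}.
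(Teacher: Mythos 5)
There is a genuine gap, and it sits exactly at the step you flagged as routine rather than the one you flagged as subtle. The transfer $d_x f = d_{x'} f$ via Lemma \ref{local_degree} is fine; what fails is the claimed bound $|d_{x'}f| \le k_f$. In the proof of Proposition \ref{prop1} that bound is \emph{not} a consequence of $x' \in \E_n(X) \cap X_f$ alone: there, $f$ maps $V_x \cap X_f$ homeomorphically onto the \emph{open} $n$-ball $\int B$, so by invariance of domain $V_x \cap X_f$ meets the Euclidean chart at $x'$ in a set open in $X$; hence $f$ is a local homeomorphism at $x'$, which is what yields $|d_{x'}f| = k_{x'} \le k_f$. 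In your setting, Lemma \ref{main}(2) with $A = F$ only gives a homeomorphism of $V_x \cap X_f \cap f^{-1}(F)$ onto $F \cap \int B$, an $(n-1)$-dimensional set; this gives no control over $f$ on any neighborhood of $x'$ in $X$, and the bound is false in general: for $X = M = S^2$ and $f$ a degree-two branched cover one has $k_f = 1$ but $|d_{x'}f| = 2$ at a branch point $x' \in \E_2(X) \cap X_f$. A symptom of the same problem: your argument never uses $\dim F = n-1$, only $F \cap \int B \ne \emptyset$, so if it worked it would show $E_{k,f}$ contains no nonempty closed set disjoint from $\ovl{f(X \setminus \E_n(X))}$ at all --- refuted by taking $F$ to be a single branch value in the example above, where $X = \E_2(X)$ makes the disjointness hypothesis vacuous.

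The dimension hypothesis has to enter through a deeper input, and in the paper it does. The disjointness of $F$ from $\ovl{f(X \setminus \E_n(X))}$ is used not pointwise at $x'$ but to shrink $B$ so that $f^{-1}(B) \subset \E_n(X)$ and each component $V_x$ is a connected oriented $n$-manifold; then $f_x : V_x \to \int B$ is a proper map of oriented connected $n$-manifolds, Lemma \ref{lemma_cov} produces $x_0 \in f^{-1}(y) \cap X_f$ with $|d_{x_0}f| > k_f$, whence $|\deg f_{x_0}| > 1$, and Theorem \ref{theoremCT}(1) of Church--Timourian is invoked: the set $E_{f_{x_0}}$ would contain $A \cap \int B$, a closed subset of $\int B$ of dimension $n-1$, which that theorem forbids. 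This is exactly where $\dim F = n-1$ does its work. (The paper also first replaces $F$ by a closed subset $S$ having dimension $n-1$ in every neighborhood of each of its points, so that $S \cap \int B$ is genuinely closed of dimension $n-1$ in $\int B$; your version, which keeps $F$ itself, would need this reduction too.) In short: Proposition \ref{prop2} cannot be obtained by re-running the Proposition \ref{prop1} argument with $A = F$; it requires the reduction to the manifold theorem.
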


\begin{proof}
Suppose in contrary, 
$ E_{k,f} $ contains  a closed (in $M$) subset $F$ of dimension $n-1$  disjoint with $\ovl{f(X \setminus \E_n(X))}$. 
Let $S$ be a closed (in $M$) subset of $F$  such that 
\begin{enumerate}
\item[{\rm(a)}] for every open  $W \subset M$ with $S \cap  W \ne \emptyset$,   $\dim (S \cap  W) = n-1$. 
\end{enumerate}

Let $T$ be an  open subset of $M$ with $S \cap  T \ne \emptyset$, and let $A = S \cap  T$.  
By Lemma \ref{main}, there exists $y \in A$ such that,  with the $B \subset T$ and $\{V_x\}_ {x \in f^{-1}(y)}$  as in \ref{max},  
for each $x \in f^{-1}(y) \cap X_f$ the conditions (1), (2) and (3) in  \ref{main} are satisfied. 

Since $f^{-1}(A) \subset \E_n(X)$, we may choose the $n$-ball $B$ (with $y \in \int B$) so small that  $f^{-1}(B) \subset \E_n(X)$, 
and moreover, that  each $V_x$  (a component of  $f^{-1}(\int B)$) is a connected  oriented $n$-manifold. 
Thus each  $f_x : V_x \to \int B$, being the restriction of $f$,  is a proper map between oriented connected $n$-manifolds. 

By Lemma \ref{lemma_cov}, there exists $x_0 \in f^{-1}(y) \cap X_f$ such that $|\text{d}_{x_0} f| > k_f$. 
It follows that $|\deg  f_{x_0}| >1$.  
Then, one can observe that $E_{f_{x_0}}$,  as defined in Theorem \ref{theoremCT},  contains $A$, a closed subset of $\int B$ of dimension $n-1$.
This contradicts  Theorem \ref{theoremCT},  which completes the proof. 
\end{proof}

\begin{corol} \label{prop3}
If $f$ is discrete then $\dim \big(E_{k,f} \setminus \ovl{f(X \setminus \E_n(X))}\big) \le n-2$.  
\end{corol}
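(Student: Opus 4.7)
The plan is to argue by contradiction: supposing that $\dim E' \ge n-1$, where
$E' := E_{k,f}\setminus\ovl{f(X\setminus\E_n(X))}$,
I will extract a closed-in-$M$ subset of $E_{k,f}$ of dimension exactly $n-1$ that is still disjoint from $\ovl{f(X\setminus\E_n(X))}$, which directly contradicts Proposition \ref{prop2}.

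First I would use the hypothesis that $f$ is discrete to invoke Lemma \ref{discrete}, giving that $E_{k,f}$ is closed in $M$. Set $U := M\setminus\ovl{f(X\setminus\E_n(X))}$, which is open in $M$. Since $M$ is a manifold, hence metrizable, the open set $U$ is $F_\sigma$ in $M$; write $U = \bigcup_{i\ge 1} C_i$ with each $C_i$ closed in $M$. Intersecting with the closed set $E_{k,f}$ yields
\[
 E' \;=\; \bigcup_{i\ge 1}\bigl(E_{k,f}\cap C_i\bigr),
\]
a representation of $E'$ as a countable union of subsets closed in $M$. The countable sum theorem for the covering dimension, which holds in the paracompact normal setting we are in, then produces an index $i_0$ with $\dim\bigl(E_{k,f}\cap C_{i_0}\bigr)\ge n-1$. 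Call this set $F$.

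Now $F$ is closed in $M$, contained in $E_{k,f}$, and disjoint from $\ovl{f(X\setminus\E_n(X))}$. If $\dim F = n-1$ we are done by Proposition \ref{prop2}. In the remaining case $\dim F = n$, I appeal to the classical dimension-theoretic fact that a closed subset of an $n$-manifold has covering dimension $n$ only when it has nonempty interior (equivalently, a closed nowhere-dense subset of $M$ has dimension at most $n-1$; see, e.g., Hurewicz--Wallman). Choosing a closed coordinate ball $\bar B$ inside $\Int F$, its boundary sphere $\partial\bar B$ is closed in $M$, has dimension $n-1$, and lies entirely inside $F\subset E_{k,f}\setminus\ovl{f(X\setminus\E_n(X))}$, so it again contradicts Proposition \ref{prop2}.

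The main point of care is the step that upgrades $\dim E'\ge n-1$ to a closed-in-$M$ subset of $E'$ of dimension $\ge n-1$: it relies on the $F_\sigma$-structure of $U$ together with the countable sum theorem for covering dimension, both routine once one has metrizability of $M$. Disposing of the $\dim F = n$ subcase uses only the elementary manifold fact recalled above, so no machinery beyond what the paper has already developed is required.
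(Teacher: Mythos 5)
Your argument is correct and takes essentially the same route as the paper's proof: the paper writes $\ovl{f(X \setminus \E_n(X))}$ as a countable intersection of open sets $U_i$ (equivalently, its complement as an $F_\sigma$, which is exactly your decomposition $U=\bigcup_i C_i$), notes that each $E_{k,f}\setminus U_i$ is closed in $M$ by Lemma \ref{discrete}, bounds each such piece using Proposition \ref{prop2}, and finishes with the countable sum theorem, your proof by contradiction being just the contrapositive phrasing of this direct argument. The one point where you go beyond the paper's write-up is in making explicit why Proposition \ref{prop2}, which excludes only closed subsets of dimension exactly $n-1$, also rules out the case $\dim F = n$ --- via the nonempty-interior fact and a boundary sphere of a coordinate ball --- a small step the paper leaves implicit when it asserts $\dim(E_{k,f}\setminus U_i)\le n-2$ directly.
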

\begin{proof}  
Let $\{U_i\}_{i=1}^\infty$ be a countable family of open subsets of $M$  such that $\bigcap U_i = \ovl{f(X \setminus \E_n(X))}$.
Each $E_{k,f} \setminus U_i$, by Lemma \ref{discrete}, is  closed, so by Proposition \ref{prop2},  $\dim \big(E_{k,f} \setminus U_i ) \le n-2$. 
Since $ E_{k,f} \setminus \ovl{f(X \setminus \E_n(X))}$  is a union of a countable number of closed subsets $ (E_{k,f} \setminus U_i)$ of dimension $\le n-2$, we obtain that  $\dim \big(E_{k,f} \setminus \ovl{f(X \setminus \E_n(X))}\big) \le n-2$. 
\end{proof}

\subsection{Absolute degree and the main theorem } \label{absolute} Let
 $p: \widetilde{M} \to M$ be the covering  such that there is a lifting of $f: X \to M$ to $\widetilde{f} : X \to \widetilde{M}$ 
and that there is no covering of $\widetilde{M}$ with this property. 
Note that  the homomorphism $\widetilde{f}_{\ast} : \pi_1 X \to \pi_1\widetilde{M}$ is onto. 
Let $j= | \pi_1 M :f_{\ast}\pi_1 X|$. 
Then  $p: \widetilde{M} \to M$ has $j$ sheets.

{\it The absolute degree} $A(f)$ of $f$  is the integer $|\deg \widetilde{f}| \cdot j$ if $j$ is finite 
and it is $0$ if  $j$ is infinite. 
If $X$ is a closed $n$-dimensional manifold  this notion coincides with the Hopf's absolute degree, see \cite{Olum} (16.5) and (17.7). 

A point $y \in M$ is called {\it essentially deficient with respect} to $f$  if $f^{-1}(y)$ is discrete and 
$|f^{-1}(y) \cap X_f| \cdot  k_f <  A(f)$. As in Lemma \ref{lemma_cov} and in Proposition \ref{prop1},  we assume $k_f$ positive.
The set of all essentially deficient  points (with respect to $f$)   is denoted by $E_f$, i.e.,
$$ E_f = \{y \in M :  |f^{-1}(y)| < \infty \ {\rm and} \  |f^{-1}(y) \cap X_f| \cdot  k_f <  A(f) \} \ .$$ 
Observe that if $k_f>0$ then $E_f=E_{k,f}$, where $k$ is the smallest integer greater than or equal to $A(f)/k_f$.

\medskip

By Proposition \ref{prop1},  Proposition \ref{prop2} and  Corollary \ref{prop3} we obtain 

\begin{theorem}[cf., Propostion 5.13 \cite{CT}] \label{maintheorem}
Suppose that $A(f) \ne 0$ and $k_f>0$. Then 
\begin{enumerate}
\item[{\rm(1)}]  If $\dim (X \setminus \E_n(X)) < n$ then  $\text{dim} \, E_f \leq n-1$.
\item[{\rm(2)}] $ E_f $ does not contain a closed (in $M$) subset $F$ of dimension $n-1$ 
                and disjoint with $\ovl{f(X \setminus \E_n(X))}$, and 
\item[{\rm(3)}] If $f$ is discrete then $\dim \big(E_f \setminus \ovl{f(X \setminus \E_n(X))}\big) \le n-2$.  
\end{enumerate}
\end{theorem}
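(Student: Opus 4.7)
The plan is to derive each of the three conclusions directly from Propositions \ref{prop1} and \ref{prop2} and Corollary \ref{prop3} by identifying $E_f$ with the set $E_{k,f}$ for a specific value of $k$. The driving observation, already flagged in the paragraph immediately preceding the theorem, is that when $k_f > 0$, the defining condition $|f^{-1}(y) \cap X_f| \cdot k_f < A(f)$ of $E_f$ is equivalent to $|f^{-1}(y) \cap X_f| < A(f)/k_f$, and since $E_{k,f}$ is defined by the strict inequality $|f^{-1}(y) \cap X_f| < k$, we may set $k = A(f)/k_f$ (equivalently, $k = \lceil A(f)/k_f \rceil$) to obtain $E_f = E_{k,f}$.

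First I would check that this value of $k$ coincides with the one appearing in the preceding results. The hypothesis $A(f) \ne 0$ forces $j := |\pi_1 M : f_{\ast}\pi_1 X|$ to be finite (by the very definition of the absolute degree in Section \ref{absolute}) and $|\deg \widetilde{f}| \ne 0$, so the setup of Section \ref{twist} is fully in force and the chosen $k = j \cdot |\deg \widetilde{f}|/k_f$ is a positive real number, matching the $k$ used in Propositions \ref{prop1} and \ref{prop2} and Corollary \ref{prop3}. Then, once $E_f = E_{k,f}$ is in hand, each assertion follows by direct transcription: (1) is Proposition \ref{prop1}, (2) is Proposition \ref{prop2}, and (3) is Corollary \ref{prop3}.

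There is essentially no obstacle here: the theorem is a repackaging of the three earlier results, and the only bookkeeping is the equivalence between the defining inequalities for $E_f$ and $E_{k,f}$, which is immediate because $k_f$ is a positive integer (being the maximum of the indices $k_x$). The only point worth writing out carefully is that passing between the non-integer value $A(f)/k_f$ and its ceiling does not change the set $E_{k,f}$, since the defining inequality is strict.
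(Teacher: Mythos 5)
Your proposal is correct and follows essentially the same route as the paper: the authors likewise observe (in the paragraph preceding the theorem) that for $k_f>0$ one has $E_f=E_{k,f}$ with $k$ the smallest integer greater than or equal to $A(f)/k_f = j\cdot|\deg\widetilde{f}|/k_f$, the ceiling being harmless exactly because the defining inequality is strict and $|f^{-1}(y)\cap X_f|$ is an integer, and they then deduce (1), (2), (3) directly from Proposition \ref{prop1}, Proposition \ref{prop2} and Corollary \ref{prop3}. Your added check that $A(f)\ne 0$ forces $j$ finite, so that the covering and lifting of Section \ref{absolute} satisfy the standing assumptions of Section \ref{twist} and the two values of $k$ agree, is exactly the bookkeeping the paper leaves implicit.
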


\medskip 

\noindent{\bf Example.} 
Let $X$ be the space obtained by attaching to $S^1$ two $2$-cells, $e_1^2, e_2^2$, using maps of degree $2$ and $3$, respectively.  Let $S^1 \subset S^2$ be the equator of the sphere $S^2$ and let $f: X \to S^2$ given by: on the $1$-skeleton, $S^{1}$, it is the identity.  On $e_1^2$ it covers the north hemisphere and it is given by a map of the type $z \mapsto z^2$, and on $e_2^2$ it covers the south hemisphere and it is a map of the type $z \mapsto z^3$.  
Then, $k_f=3$, $A(f)=6$ and $$E_f = \{ (x,y, 0) \in S^2\} \cup \{ (0,0,1), (0,0,-1)\}.$$ Hence, in this example, $\text{dim} \, E_f=1$. 
This shows that the assumption in  Theorem \ref{maintheorem} (2), that $F$  is disjoint with $\ovl{f(X \setminus \E_n(X))}$, is essential.

\section{Multiple points of maps on  a  complex}

 Recall that a point $x\in X$ is a multiple point of a function $f:X \to Y$ if $f^{-1}(f(x))\neq \{x\}$. We provide two results. The first result is in subsection 3.1, for maps $f: X \to M$ where $X$ and $M$ satisfies the hypotheses of the previous section.  In subsection 3.2, we will consider maps $f: X \to M$ for $X$ a more general space.

\subsection{Multiple points }

 We say that $X$ is {\it locally non-trivial with respect to a local system $\mathcal H$}  over $X$,  if the homomorphism $H^{n}(i): H^n(X, X \setminus \{x\};\mathcal H) \to H^{n} (X;\mathcal H)$ induced by the inclusion  is nontrivial for each $x \in X$. 
\medskip

Let $f:X \to M$, $\G$ and $\G_f$ be as in Section 2. 

\begin{theorem}\label{dominating}
Suppose that $X$  is locally non-trivial with respect to $\G_f$ and that the set of multiple points of $f$ is not dense in $X$.
Then $f_{\ast} : \pi_1 X \to \pi_1 M$ is onto
and $A(f)=|\deg f|$.  Consequently, $0 < A(f) \le k_f $. 
\end{theorem}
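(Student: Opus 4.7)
The plan is to extract from the hypothesis a single point $x_0$ in an open set $U\subset X$ of single points, to show that $d_{x_0}f=\pm k_{x_0}g_X$ (so $\deg f\neq 0$ and $|\deg f|\leq k_f$), and then to use this to force the covering from Section \ref{absolute} to have one sheet. First, properness of $f$ and openness of $U$ let me shrink an $n$-ball $B$ about $y_0:=f(x_0)$ until $f^{-1}(\overline{B})\subset U$; since $f|U$ is injective, $f|f^{-1}(\overline{B})$ is a continuous bijection from a compactum to a Hausdorff space, hence a homeomorphism onto its image. Restricting to the component $V_0$ of $f^{-1}(\int B)$ containing $x_0$ gives $f|V_0\colon V_0\to f(V_0)\subset\int B$ a homeomorphism.

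Applying Lemma \ref{local_degree} at $x_0$ then gives $d_{x_0}f=\sum_{x'\in f^{-1}(z)\cap V_0\cap X_f}d_{x'}f$ for each $z\in\int B$, and injectivity of $f|V_0$ makes the sum have at most one term. Two cases arise. If $f(V_0)=\int B$, then $V_0\approx\R^n$, so $x_0\in\E_n(X)$; excision together with this homeomorphism identifies $H^n(X,X\setminus\{x_0\};\G_f)\cong H^n(\int B,\int B\setminus\{y_0\};\G)\cong\Z$, and the element defining $d_{x_0}f$ in Section 2.1 corresponds to a generator, so local non-triviality gives $d_{x_0}f=\pm k_{x_0}g_X$ with $k_{x_0}>0$. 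If instead $f(V_0)\subsetneq\int B$, picking $z\in\int B\setminus f(V_0)$ gives $f^{-1}(z)\cap V_0=\emptyset$, whence Lemma \ref{local_degree} forces $d_{x_0}f=0$.

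The principal obstacle is to rule out this second case. My proposal is to exploit local non-triviality via a cohomological invariance-of-domain argument: through the homeomorphism $V_0\approx f(V_0)$, local non-triviality of $X$ transfers to non-vanishing of $H^n(f(V_0),f(V_0)\setminus\{y\};\G)$ at every $y\in f(V_0)$, and combined with $M$ being an $n$-manifold this should force $f(V_0)$ to be open in $M$. Once $f(V_0)$ is open, excision identifies $H^n(M,M\setminus\{y_0\};\G)$ with $H^n(f(V_0),f(V_0)\setminus\{y_0\};\G)$, making the first-case analysis apply and giving $d_{x_0}f\neq 0$, contradicting $d_{x_0}f=0$. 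Hence only the first case occurs and $|\deg f|=k_{x_0}\in(0,k_f]$.

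Finally, suppose $f_{\ast}$ is not onto, so the cover $p\colon\widetilde{M}\to M$ from Section \ref{absolute} has $j\geq 2$ sheets with lift $\widetilde{f}$. Since $f=p\circ\widetilde{f}$, $x_0$ is also a single point of $\widetilde{f}$, and Remark \ref{rem_cov}(d) together with Lemma \ref{degree} gives $(\deg\widetilde{f})g_X=d_{x_0}\widetilde{f}=d_{x_0}f\neq 0$. On the other hand, any $\widetilde{y}_2\in p^{-1}(y_0)\setminus\{\widetilde{f}(x_0)\}$ satisfies $\widetilde{f}^{-1}(\widetilde{y}_2)\subset f^{-1}(y_0)\setminus\{x_0\}=\emptyset$, so Lemma \ref{degree} applied at $\widetilde{y}_2$ forces $\deg\widetilde{f}=0$, a contradiction. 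Therefore $j=1$, $f_{\ast}$ is onto, and $A(f)=|\deg f|\in(0,k_f]$.
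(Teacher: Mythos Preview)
Your proof is correct and follows the same route as the paper's. The paper compresses your first three paragraphs into a single assertion: from a single point $x\in V$ it declares that ``there exists a neighborhood $U$ of $f(x)$ such that $f:f^{-1}(U)\to U$ is a homeomorphism,'' and from this immediately reads off that $f^*\colon H^n(M,M\setminus\{f(x)\};\G)\to H^n(X,X\setminus\{x\};\G_f)$ is an isomorphism, whence the commuting square gives $H^n(f)\neq 0$ and $|d_xf|=k_x>0$. You instead unpack this through Lemma~\ref{local_degree} and a case split, and you rightly isolate the substantive point---that the image $f(V_0)$ must be open in $M$---as an invariance-of-domain fact that the paper leaves implicit; your cohomological sketch (non-vanishing of $H^n(f(V_0),f(V_0)\setminus\{y\})$ forcing openness) is a valid way to justify it. Once $f(V_0)$ is open, your case~(b) is vacuous, so the computation $d_{x_0}f=0$ there is a harmless detour rather than a needed step.

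For the covering step, your empty-fibre contradiction via Lemma~\ref{degree} at $\widetilde y_2$ is precisely the mechanism behind the paper's line ``$\widetilde f^*\neq 0$, therefore $\widetilde f$ is surjective''; combined with $f^{-1}(y_0)=\{x_0\}$ this forces $p$ to have one sheet, which is how the paper concludes as well.
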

\begin{proof} Since $\mathcal{M}(f)$, the set of multiple points of $f$, is not dense in $X$ there exists an open subset  $V$ of $X$ of single points.  
Let $x\in V$. 
Then, there exists a neighborhood $U$ of $f(x)$ such that $f: f^{-1}(U) \to U$ is a homeomorphism. 

It follows that $f^{\ast} : H^n(M, M \setminus \{f(x)\}; \G) \to H^n(X, X \setminus \{x\}; \G_f)$ is an isomorphism. From the assumption that $X$ is locally non-trivial, we have that $H^{n}(i): H^n(X, X \setminus \{x\};\G_f) \to H^{n} (X;\G_f)$ is non-trivial. Thus, by the commutativity  of the diagram 

\begin{table}[h]
\centering
\begin{tabular}{c} \xymatrix{ H^n (X, X \setminus \{x \}; \G_f)   \ar[r]^{\  \  \  \neq 0}& H^n(X; \G_f)  \\
H^n(M, M \setminus \{f(x)\}; \G)  \ar[r]^{\  \  \  \  \ \simeq}   \ar[u]_{\simeq} & H^n(M, \G) \ar[u]}
\end{tabular}
\end{table}
\noindent  it follows that $f^{\ast} : H_n(M;\G) \to H^n(X,\G_f)$ is a non-trivial homomorphism. 
Note also that \\ 
(i)  \ \ \  $  \,   |d_xf| = |k_x| > 0$.


Let $p: \widetilde{M} \to M$ be the covering  such that there is a lifting of $f: X \to M$ to $\widetilde{f} : X \to \widetilde{M}$ 
and that there is no covering of $\widetilde{M}$ with this property. 

Since $f^{\ast}: H^n(M; \G) \to H^n(X, \G_f)$ is non-trivial and $f= p \circ \widetilde{f}$, $\widetilde{f}^{\ast}: H^n(\widetilde{M}; \widetilde{\G}) \to H^n(X; \G_f) $ is non-trivial. Therefore, $\widetilde{f}$ is surjective. 
Thus, since  $f: f^{-1}(U) \to U$  is a bijection, $p$ is the identity. 
Hence,  $f_{\ast} : \pi_1 X \to \pi_1 M$ is onto and \\
(ii) \ \   $A(f)=|\deg f|$. 

By Lemma \ref{degree}, $|\deg f|= |d_x f|$.  Thus, by  (i) and  (ii), we obtain  $0 < A(f) \le k_f $. 
\end{proof}

\begin{remark}  Note that the above theorem generalizes Theorem 2.1 b) of \cite{GonOre} because, in case of $X$ a manifold, $k_f=1$.

\end{remark}

\bigskip


\subsection{Case of  more general complexes}

In this subsection, by $H$ we denote the \u{C}ech cohomology functor with integer coefficients. 

\

\begin{dfn} A space $X$ has dimension $\geq n$ at a point $x\in X$ if each neighborhood of $x$ $($in $X$$)$ has dimension $\geq n$. If, in addition, some neighborhood of $x$ has dimension $n$ then we say that $X$ has dimension $n$ at $x$.

\end{dfn}



\begin{theorem}\label{theorem1} Let $X$ be a compact metric space with dimension $n$ at each point. Then the following conditions are equivalent:
\begin{enumerate}
\item[a)] There is a map $f: X \to M$ from $X$ into a   $n$-manifold such that the induced homomorphism $H^{n}(f): H^n(M) \to H^n(X) $ is null and the set of multiple points $\mathcal{M}(f)$ is not dense in $X$;
\item[b)] $H^{n}(i): H^n(X) \to H^n(X \setminus int \, D^n)$ is an isomorphism for some $n$-disc $D^n \subset \mathcal{E}_n(X)$, where $i : X \setminus int \, D^n  \to X $ is the inclusion.
\end{enumerate}
\end{theorem}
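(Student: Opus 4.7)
The strategy is a commutative-diagram argument anchored in the long exact sequence of the pair $(X, X \setminus \int D^n)$. For (b) $\Rightarrow$ (a), take $M = S^n$ and define $f: X \to S^n$ by collapsing $X \setminus \int D^n$ to a basepoint $* \in S^n$ and letting $f|_{D^n}$ be the quotient $D^n \to D^n/\partial D^n \cong S^n$. Continuity is immediate, and $f|_{\int D^n}$ is a homeomorphism onto $S^n \setminus \{*\}$, so every point of the nonempty open set $\int D^n$ is a single point of $f$ and $\mathcal{M}(f)$ is not dense. For the vanishing, view $f$ as a map of pairs $(X, X \setminus \int D^n) \to (S^n, \{*\})$; the resulting commutative square with verticals given by the natural maps into the absolute cohomology has $H^n(S^n, \{*\}) \to H^n(S^n)$ an isomorphism, while $H^n(X, X \setminus \int D^n) \to H^n(X)$ is zero by exactness, since its image is the kernel of the hypothesized isomorphism $H^n(i)$. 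Diagram chasing yields $H^n(f) = 0$.

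For (a) $\Rightarrow$ (b), let $V$ be a nonempty open set of single points of $f$, pick $x \in V$, and set $y = f(x)$. Because $X$ is compact and $M$ Hausdorff, $f$ is a closed map, so $f(X \setminus V)$ is closed in $M$ and omits $y$; there exists therefore an open coordinate ball $U \ni y$ with $f^{-1}(\ovl U) \subset V$ and $\ovl U$ contained in a chart identified with $\R^n$. Set $K := f^{-1}(\ovl U)$, so $f|_K : K \to f(K) \subset \ovl U$ is a homeomorphism of the compact set $K$. Since $X$ has dimension $n$ at the point $x \in f^{-1}(U) \subset K$, $\dim K = n$, and hence $f(K)$ is a closed subset of $\R^n$ of covering dimension $n$. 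The classical dimension-theoretic result (Hurewicz--Wallman) that such a set must contain an open subset of $\R^n$ produces an open Euclidean ball $W \subset f(K) \cap U$. Then $N := f^{-1}(W) \subset V \cap K$ is open in $X$ and $f$ maps it homeomorphically onto $W \cong \R^n$, so $N \subset \E_n(X)$.

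Pick any small closed $n$-disc $E \subset W$ and set $D^n := f^{-1}(E)$, so $D^n \subset N \subset \E_n(X)$ and $f|_{D^n}$ is a homeomorphism of pairs $(D^n, \partial D^n) \to (E, \partial E)$. Since $\int D^n = f^{-1}(\int E)$, $f$ restricts to a map of pairs $(X, X \setminus \int D^n) \to (M, M \setminus \int E)$, and standard excision together with the homeomorphism $f|_{D^n}$ identifies the induced map $f^* : H^n(M, M \setminus \int E) \to H^n(X, X \setminus \int D^n)$ with the isomorphism $H^n(E, \partial E) \to H^n(D^n, \partial D^n)$. The commutative square with absolute cohomology then forces $H^n(X, X \setminus \int D^n) \to H^n(X)$ to be zero, since the bottom row $H^n(f) : H^n(M) \to H^n(X)$ is zero by hypothesis; and as $H^{n+1}(X, X \setminus \int D^n) \cong H^{n+1}(D^n, \partial D^n) = 0$, the long exact sequence of the pair yields that $H^n(i)$ is an isomorphism. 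The main obstacle is the dimensional step producing the open Euclidean set $N$; this is where the hypothesis that $X$ has dimension exactly $n$ at each point (not merely $\le n$) is essential, and everything else is a routine exercise with excision and the long exact sequence.
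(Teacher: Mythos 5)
Your proof is correct, but in one direction it takes a genuinely different route from the paper. For (b) $\Rightarrow$ (a) the paper (its Lemma \ref{lemma1}) shows that the hypothesis kills the coboundary $H^{n-1}(\partial D^n)\to H^n(X\setminus \operatorname{int} D^n,\partial D^n)$ and then invokes Hopf's extension theorem to build a retraction $f\colon X\to D^n$ with $f(X\setminus \operatorname{int} D^n)\subset\partial D^n$; since the target is contractible, $H^n(f)=0$ is automatic. You instead collapse $X\setminus \operatorname{int} D^n$ to a point, obtaining the quotient map to $D^n/\partial D^n\cong S^n$, and verify $H^n(f)=0$ from the pair sequence, using only the \emph{injectivity} of $H^n(i)$. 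This is more elementary: it avoids the Hopf extension theorem entirely (and with it the global bound $\dim X\le n$ that the extension theorem needs), at the cost of producing a map to the closed manifold $S^n$ rather than the stronger geometric conclusion of a retraction onto a Euclidean disc, which is what motivates the paper's notion of $n$-tightness. For (a) $\Rightarrow$ (b) your argument is essentially the paper's (Lemma \ref{lemma2}) in different clothing: the geometric core is identical --- a saturated open set of single points mapped homeomorphically, the Hurewicz--Wallman/Engelking fact that an $n$-dimensional subset of $\mathbb{R}^n$ has nonempty interior (this is indeed where ``dimension $\ge n$ at each point'' is essential), and pulling back a disc $E$ with $f^{-1}(\operatorname{int} E)=\operatorname{int} D^n$ --- but you run the cohomology through the long exact sequence of the pair plus excision, where the paper uses Mayer--Vietoris; the two are routinely interchangeable, and your excision computation $H^{n+1}(X,X\setminus\operatorname{int} D^n)\cong H^{n+1}(D^n,\partial D^n)=0$ even gives surjectivity of $H^n(i)$ without appealing to $\dim X\le n$, which the paper's Remark 3.1 uses for that purpose. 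You are also more careful than the paper in producing the saturated neighborhood (via closedness of $f$ on the compact $X$), which the paper simply asserts. Two cosmetic points: your ball $W$ lies a priori only in $f(K)\subset\overline{U}$, so you should intersect with $U$ (harmless, since an open subset of $\overline{U}$ meets $U$); and openness of $\operatorname{int} D^n$ in $X$, needed for your quotient and pasting arguments in (b) $\Rightarrow$ (a), follows from invariance of domain inside the open manifold $\mathcal{E}_n(X)$ and deserves a word.
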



\

Motivated by the above theorem, we define the following:

\begin{dfn} An $n$-dimensional space $X$ is called $n$-tight  if 
there is {\bf no} $n$-disc $D^n \subset \mathcal{E}_n(X)$ such that  $H^{n}(i):
H^n(X) \to H^n(X \setminus int \, D^n)$ is an isomorphism,  where $i : X
\setminus int \, D^n  \to X $ is the inclusion.

\end{dfn}

 Theorem \ref{theorem1} is a consequence of the following two lemmas.

\begin{lemma}\label{lemma1} Let $X$ be an $n$-dimensional compact metric space. If there is an $n$-disc $D^n \subset \mathcal{E}_n(X)$ such that $H^{n}(i): H^n(X) \to H^n(X \setminus int \, D^n)$ is an isomorphism then there is a retraction $f: X \to D^n$ such that $f(X \setminus int \, D^n) \subset \partial D^n $. 

\end{lemma}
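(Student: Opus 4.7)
The plan is to split the problem into two pieces glued along $\partial D^n$. Set $A := X \setminus \operatorname{int} D^n$, which is closed in $X$; it suffices to construct a continuous $g: A \to \partial D^n$ with $g|_{\partial D^n} = \operatorname{id}_{\partial D^n}$, since then $f$ defined as the identity on $D^n$ and as $g$ on $A$ is well-defined and continuous by the pasting lemma (both $D^n$ and $A$ are closed in $X$ and meet in $\partial D^n$).

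To produce $g$, I would first extract a cohomological extension from the hypothesis. Since $D^n \subset \mathcal{E}_n(X)$, the sphere $\partial D^n$ is bicollared in $X$, so standard excision/good-pair arguments in \u{C}ech cohomology yield $H^{*}(X, A) \cong H^{*}(D^n, \partial D^n)$; in particular $H^n(X, A) \cong \mathbb{Z}$. Because $D^n$ is contractible, the connecting map $\delta \colon H^{n-1}(\partial D^n) \to H^n(D^n, \partial D^n)$ is an isomorphism (for $n \geq 2$; the case $n=1$ uses the analogous statement with $H^0$, and is handled by the same scheme). The hypothesis that $H^n(i)$ is an isomorphism forces, via exactness of the long sequence of $(X, A)$, the map $H^n(X, A) \to H^n(X)$ to be zero, hence $H^{n-1}(A) \to H^n(X, A)$ is surjective. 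Naturality of the connecting homomorphism under the pair inclusion $(D^n, \partial D^n) \hookrightarrow (X, A)$ then gives that the restriction $H^{n-1}(A) \to H^{n-1}(\partial D^n)$ hits the generator; equivalently, the class represented by $\operatorname{id}_{\partial D^n}$ extends from $H^{n-1}(\partial D^n)$ to $H^{n-1}(A)$.

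To upgrade this cohomological extension to a continuous extension, I would apply the Hopf extension theorem in its form for normal spaces of bounded dimension (e.g.\ Spanier, \textit{Algebraic Topology}, Ch.~8): for a normal space $Y$ with $\dim Y \leq m+1$ and $B \subset Y$ closed, a map $\phi \colon B \to S^m$ extends to $Y$ iff $\phi^{*}$ of the generator of $H^m(S^m)$ lies in the image of $H^m(Y) \to H^m(B)$. I would invoke this with $Y = A$, $B = \partial D^n$, $m = n-1$, and $\phi = \operatorname{id}$: the hypothesis $\dim A \leq n = m+1$ is supplied by the assumption on $X$, and the cohomological condition is exactly what was verified in the previous paragraph. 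This produces $g \colon A \to \partial D^n$ extending the identity, and gluing with $\operatorname{id}_{D^n}$ completes the retraction.

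The main obstacle I anticipate is bookkeeping in the \u{C}ech framework: one must check that the excision/good-pair identification $H^{*}(X,A) \cong H^{*}(D^n,\partial D^n)$ and the Hopf extension theorem apply in this compact-metric, non-polyhedral setting. Both rely on $\partial D^n$ being bicollared in $X$, which is where the hypothesis $D^n \subset \mathcal{E}_n(X)$ is essential; once that geometric input is secured, the rest is formal diagram-chasing together with an off-the-shelf extension theorem.
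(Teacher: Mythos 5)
Your proof is correct and takes essentially the same route as the paper's: both reduce the lemma to extending $\mathrm{id}_{\partial D^n}$ over $X \setminus \operatorname{int} D^n$ by the Hopf extension theorem for $n$-dimensional normal spaces, verifying its cohomological criterion by an excision/exact-sequence chase driven by the hypothesis that $H^n(i)$ is an isomorphism. The only difference is cosmetic: you chase the long exact sequence of the pair $(X, X \setminus \operatorname{int} D^n)$ and use naturality of the connecting homomorphism to show the class of the identity extends (the ``image'' form of Hopf's criterion), whereas the paper chases the mirror commutative square built from the pairs $(X, D^n)$ and $(X \setminus \operatorname{int} D^n, \partial D^n)$ to conclude that the coboundary $H^{n-1}(\partial D^n) \to H^n(X \setminus \operatorname{int} D^n, \partial D^n)$ is trivial --- the equivalent ``coboundary'' form cited from Engelking, p.~94, (H).
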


\proof  Consider the following commutative diagram:


\begin{table}[h]
\centering
\begin{tabular}{c} \xymatrix{ H^n (X) \ar[d] & H^n(X, D^n) \ar[l]_{\simeq} \ar[d]_{\simeq} \\
H^n(X \setminus int \, D^n) & H^n(X \setminus int \, D^n , \partial D^n) \ar[l]}
\end{tabular}
\end{table}

\noindent where all homomorphisms are induced by inclusions. Note that the upper horizontal and the right arrows are isomorphisms. Since, by the assumption, the left arrow is also an isomorphism, it follows that the lower horizontal arrow is also an isomorphism. It follows that the coboundary homomorphism $H^{n-1}(\partial D^n) \to H^n(X \setminus int \, D^n, \partial D^n) $ is trivial. Thus, by the Hopf's extension theorem [see \cite{Engelking}, p. 94, (H)], there is a retraction $X \setminus int \, D^n \to \partial D^n$, which completes the proof. \qed

\begin{lemma}\label{lemma2} Let $X$ be a compact metric space of dimension $\geq  n$ at each point, and let $f :X \to M$ be a map into an  $n$-manifold $M$ such that $H^{n}(f): H^n(M) \to H^n(X) $ is null and the set of multiple points $\mathcal{M}(f)$ is not dense in $X$. Then there is an $n$-disc $D^n \subset \mathcal{E}_n(X)$ such that the homomorphism induced by the inclusion $H^{n}(i): H^n(X) \to H^n(X \setminus int \, D^n)$ is an isomorphism.
\end{lemma}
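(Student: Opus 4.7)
My plan is to convert the non-density of $\mathcal{M}(f)$ into a region on which $f$ is a local homeomorphism onto an open subset of $M$, carve out a well-placed $n$-disc inside, and then read off the desired isomorphism from the long exact sequence of a pair together with the vanishing $H^n(f)=0$.

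First, choose an open $V\subset X$ of single points, fix $x_0\in V$, and take a closed (hence compact) neighborhood $W\subset V$ of $x_0$. The single-point property makes $f|_W$ injective, so $f|_W\colon W\to f(W)\subset M$ is a continuous bijection from a compact to a Hausdorff space, hence a homeomorphism. The hypothesis that $X$ has dimension $\ge n$ at every point gives $\dim W\ge n$, and since $f(W)\subset M$ with $\dim M=n$, we have $\dim f(W)=n$. The main geometric obstacle I anticipate is then to conclude that this closed $n$-dimensional subset of the $n$-manifold $M$ has non-empty interior in $M$; this is a classical consequence of invariance of domain (compare \cite{HW}).

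Granting it, let $B_0$ denote the non-empty interior of $f(W)$ in $M$. Every $y\in B_0\subset f(V)$ has a unique preimage, which must lie in $W$, so $U:=f^{-1}(B_0)\subset W$ and $f|_U\colon U\to B_0$ is a homeomorphism. In particular $U$ is open in $X$ and $U\subset\E_n(X)$, since $B_0$ is an open subset of the $n$-manifold $M$. Choose a closed $n$-disc $\tilde D^n\subset B_0$ and set $D^n:=(f|_U)^{-1}(\tilde D^n)$; then $D^n\subset\E_n(X)$, and because $\tilde D^n\subset B_0$ has all its preimages in $U$, $D^n=f^{-1}(\tilde D^n)$ is closed in $X$. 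The same single-point property shows that $f$ induces a map of pairs $(X,X\setminus\int D^n)\to(M,M\setminus\int\tilde D^n)$: if $x'\notin\int D^n$ had $f(x')\in\int\tilde D^n\subset B_0$, then $x'=(f|_U)^{-1}(f(x'))\in\int D^n$, a contradiction.

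Two applications of excision -- removing $X\setminus U$ from the $X$-pair (legitimate because $D^n\subset U$) and $M\setminus B_0$ from the $M$-pair (legitimate because $\tilde D^n\subset B_0$) -- combined with the homeomorphism $f|_U$, yield an isomorphism
$$f^*\colon H^n(M,M\setminus\int\tilde D^n)\stackrel{\cong}{\longrightarrow}H^n(X,X\setminus\int D^n).$$
Naturality gives a commutative square
$$
\xymatrix{
H^n(M,M\setminus\int\tilde D^n)\ar[r]\ar[d]^{\cong}_{f^*} & H^n(M)\ar[d]^{H^n(f)=0}\\
H^n(X,X\setminus\int D^n)\ar[r] & H^n(X)\,,
}
$$
and the hypothesis $H^n(f)=0$ forces the lower horizontal arrow to be zero. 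Since $H^{n+1}(X,X\setminus\int D^n)\cong H^{n+1}(\tilde D^n,\partial\tilde D^n)=0$ by the same excision, the long exact sequence of the pair $(X,X\setminus\int D^n)$ collapses to the required isomorphism $H^n(i)\colon H^n(X)\xrightarrow{\cong}H^n(X\setminus\int D^n)$.
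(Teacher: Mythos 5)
Your argument is correct and is essentially the paper's own proof: the same geometric construction (an open region of single points mapped homeomorphically into $M$, whose image has dimension $n$ and hence non-empty interior --- the paper cites Engelking, Theorem 1.8.10, for this, which is a dimension-theoretic fact rather than invariance of domain --- with $D^n$ taken to be the preimage of a ball $B^n$ in that interior), followed by the same use of naturality and the hypothesis $H^n(f)=0$ to kill the map landing in $H^n(X)$. The only difference is bookkeeping: the paper compares the Mayer--Vietoris sequences of the decompositions $X=(X\setminus \operatorname{int} D^n)\cup D^n$ and $M=(M\setminus \operatorname{int} B^n)\cup B^n$, whereas you compare the long exact sequences of the pairs $(X,X\setminus \operatorname{int} D^n)$ and $(M,M\setminus \operatorname{int} B^n)$ via excision --- equivalent arguments, with your identification $H^{n+1}(X,X\setminus \operatorname{int} D^n)\cong H^{n+1}(\tilde D^n,\partial \tilde D^n)$ requiring strong excision (or a radial deformation inside a chart) for \v{C}ech cohomology, a standard point worth stating explicitly.
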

\proof Since $\mathcal{M}(f)$ is not dense in $X$, there is an open set $U \subset X$ such that $f$ maps homeomorphically $U$ onto $f(U)$ and $f^{-1}(f(U)) = U$. Since the dimension of $X$ at each point is $\geq n$, $dim \, U \geq n$. Consequently, $f (U )$ is an $n$-dimensional subset of $M$. By [\cite{Engelking}, Theorem 1.8.10, p. 76], the interior of $f(U)$ in $M$ is non-empty, so $f(U)$ contains an $n$-disc $B^n$. We set  $D^n := f^{-1}(B^n)$. Then $D^n \subset \mathcal{E}_n (X)$. We consider the following commutative diagram:

\begin{table}[h]
\centering
\begin{tabular}{c} \xymatrix{ H^{n-1} (\partial D^n) \ar[r]^{ \  \  \delta^\ast} & H^n(X) \ar[r]^{\hspace{-1.1cm}\vartheta} & H^n(X_D)\oplus H^n(D^n) \ar[r] & H^n(\partial D^n) \\
H^{n-1}(\partial  B^n) \ar[u] \ar[r] & H^n(M) \ar[u]^{H^{n}(f)} &  & }
\end{tabular}
\end{table}

\noindent where $X_D := X \setminus int \, D^n$,  the rows are parts of Mayer-Vietoris exact sequences, and the vertical homomorphisms are induced by the maps defined by $f$.

Since $H^n(\partial D^n) = 0$, $\vartheta$ is an epimorphism. Since $H^{n-1}(\partial B^n) \to H^{n-1}(\partial D^n)$ is an isomorphism and $H^{n}(f): H^n(M) \to H^n(X) $ is null, the homomorphism $\delta^\ast  : H^{n-1}(\partial D^n) \to H^n(X)$ is trivial, thus $\vartheta$ is a monomorphism. So, $\vartheta$ is an isomorphism. Consequently, since $H^n(D^n) = 0$, the homomorphism induced by the inclusion $H^n(X) \to H^n(X_D)$ is an isomorphism.  \qed


\

\noindent {\bf Proof of Theorem \ref{theorem1}:} From Lemma \ref{lemma1} we obtain that (b) implies (a). 
 From Lemma \ref{lemma2}, (a) implies (b).

\

\begin{obs}
 Let $X$ be an $n$-dimensional compact metric space, $C$ a component of $\mathcal{E}_n(X)$ and $D^n \subset C$ an $n$-disc. Then,  $H^{n}(i): H^n(X) \to H^n(X \setminus int \, D^n)$ is an isomorphism
if and only if $H^{n}(i_C): H^n(X) \to H^n(X \setminus C)$ is an isomorphism. 
Thus in the condition (b)  in Theorem~\ref{theorem1} and in the Definition~3.2,  $H^{n}(i)$ can be replaced by  $H^{n}(i_C)$.
\end{obs}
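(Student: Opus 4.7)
The plan is to reduce both conditions to the vanishing of the same natural class in $H^n(X)$. The key ingredient is the \u{C}ech-cohomological identification $H^k(X, X \setminus U) \cong H^k_c(U)$ for any open subset $U$ of the paracompact space $X$, combined with Poincar\'e duality for the open $n$-manifold $C$.

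Applied to $U = \int\, D^n$: $H^n(X, X \setminus \int\, D^n) \cong \Z$ and $H^{n+1}(X, X \setminus \int\, D^n) = 0$. Applied to $U = C$: $H^n(X, X \setminus C) \cong H^n_c(C)$ is $\Z$ or $\Z/2$ (depending on orientability of $C$), and $H^{n+1}(X, X \setminus C) \cong H^{n+1}_c(C) = 0$. Feeding these into the long exact sequences of the two pairs and using $H^{n+1}(X) = 0$ (from $\dim X = n$), one finds that $H^n(i)$ is an isomorphism iff the connecting map $\alpha_D : H^n(X, X \setminus \int\, D^n) \to H^n(X)$ vanishes, and $H^n(i_C)$ is an isomorphism iff $\alpha_C : H^n(X, X \setminus C) \to H^n(X)$ vanishes (the auxiliary condition $H^{n+1}(X, X \setminus C) = 0$ is automatic from the identification above).

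The inclusion of pairs $(X, X \setminus C) \hookrightarrow (X, X \setminus \int\, D^n)$ (identity on $X$, inclusion $X \setminus C \hookrightarrow X \setminus \int\, D^n$) induces $\Phi : H^n(X, X \setminus \int\, D^n) \to H^n(X, X \setminus C)$ fitting into a commutative triangle with $\alpha_D = \alpha_C \circ \Phi$. Under the $H^*_c$ identification, $\Phi$ is the extension-by-zero map $H^n_c(\int\, D^n) \to H^n_c(C)$, which sends the fundamental class of $\int\, D^n$ to a generator of $H^n_c(C)$ --- an isomorphism $\Z \to \Z$ in the orientable case, and the reduction $\Z \twoheadrightarrow \Z/2$ in the non-orientable case. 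In either case $\Phi$ is surjective, so $\alpha_D = 0$ iff $\alpha_C = 0$, yielding the claimed equivalence.

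The main obstacle will be rigorously justifying the \u{C}ech-cohomological identification $H^k(X, X \setminus U) \cong H^k_c(U)$ and the Poincar\'e-duality computations for $C$ under the minimal hypotheses on $X$ (compact metric, $n$-dimensional at each point, but not assumed to be an ANR or CW-complex); once these foundational facts are in place, the argument above is essentially a diagram chase.
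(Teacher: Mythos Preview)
Your proposal is correct and follows essentially the same route as the paper: both reduce the question to whether the maps $H^n(X,X\setminus\int D^n)\to H^n(X)$ and $H^n(X,X\setminus C)\to H^n(X)$ vanish, and then use that the comparison map $\Phi=H^n(\iota):H^n(X,X\setminus\int D^n)\to H^n(X,X\setminus C)$ is an epimorphism together with the factorization $\alpha_D=\alpha_C\circ\Phi$. The paper simply asserts that $H^n(\iota)$ is onto and that $H^n(i),H^n(i_C)$ are onto ``since $\dim X\le n$'', whereas you supply the justification via the identification $H^k(X,X\setminus U)\cong H^k_c(U)$ and Poincar\'e duality for the connected $n$-manifold $C$; this extra detail is exactly what is needed and your foundational worry is not a genuine obstacle (the identification is standard for \u{C}ech cohomology on compact metric spaces). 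One small terminological slip: your $\alpha_D,\alpha_C$ are not ``connecting maps'' but the ordinary restriction maps $H^n(X,A)\to H^n(X)$ in the long exact sequence of the pair.
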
 

\proof Since $dim \, X \leq n$, both $H^{n}(i)$ and $H^{n}(i_C)$ are epimorphisms. Thus we need to show that
$H^{n}(i): H^n(X) \to H^n(X \setminus int \, D^n)$ is a monomorphism
if and only if $H^{n}(i_C): H^n(X) \to H^n(X \setminus C)$ is a monomorphism,
or equivalently, that
$H^{n}(j): H^n(X, X \setminus int \, D^n) \to H^n(X)$ is trivial
if and only if $H^{n}(j_C): H^n(X, X \setminus C) \to H^n(X)$ is trivial.
The later equivalence is a consequence of the fact that $$ H^{n}(\iota):  H^n(X, X \setminus int \, D^n) \to H^n(X, X \setminus C)$$ is an epimorphism and that $H^{n}(j)= H^{n}(j_C) \circ H^{n}(\iota)$. \qed

\bigskip

\begin{corol} \label{corol1}
Let $X$ be an $n$-tight compact metric space with dimension $n$ at each point and  $M$  an  $n$-manifold.   
Then for every map $f: X \to M$  such that $H^{n}(f): H^n(M) \to H^n(X)$  is null the set of multiple points $\mathcal{M}(f)$ is dense in $X$.
In particular, if $X$ is a closed $n$-manifold and $H^{n}(f): H^n(M) \to H^n(X) $ is  null then the set of multiple points $\mathcal{M}(f)$ is dense in $X$.
\end{corol}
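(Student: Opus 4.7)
The plan is to deduce this corollary directly from Lemma~\ref{lemma2} and the definition of $n$-tightness, using the contrapositive. The intuition is that $n$-tightness is exactly the obstruction preventing condition (b) of Theorem~\ref{theorem1} from holding, so it must prevent (a) as well, which is what the corollary asserts.

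For the first assertion, I would argue by contradiction: suppose there exists a map $f: X \to M$ with $H^n(f) = 0$ whose set of multiple points $\mathcal{M}(f)$ is not dense in $X$. Since $X$ is compact metric of dimension $n$ at each point, the hypotheses of Lemma~\ref{lemma2} are satisfied, so there exists an $n$-disc $D^n \subset \mathcal{E}_n(X)$ for which the inclusion-induced map $H^n(i): H^n(X) \to H^n(X \setminus \operatorname{int} D^n)$ is an isomorphism. This directly contradicts the assumption that $X$ is $n$-tight, so $\mathcal{M}(f)$ must be dense.

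For the second assertion, I would verify that every closed $n$-manifold $X$ is $n$-tight, from which the conclusion follows by the first part. Assuming $X$ connected (otherwise argue componentwise, since $\mathcal{E}_n(X) = X$ and any $n$-disc lies in a single component), I note that for any $n$-disc $D^n \subset X$, the complement $X \setminus \operatorname{int} D^n$ is a compact connected $n$-manifold with nonempty boundary, hence $H^n(X \setminus \operatorname{int} D^n;\mathbb{Z}) = 0$. On the other hand, $H^n(X;\mathbb{Z})$ is either $\mathbb{Z}$ (if $X$ is orientable) or $\mathbb{Z}/2$ (if not), and in either case nontrivial. Therefore $H^n(i)$ cannot be an isomorphism for any such $D^n$, which is exactly the statement that $X$ is $n$-tight.

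I do not expect any real obstacle: once Lemma~\ref{lemma2} is available, the first part is essentially a rephrasing of the definition of $n$-tight, and the second part reduces to the standard Čech-cohomological fact that the top integral cohomology of a compact manifold-with-boundary vanishes, combined with the known computation of $H^n$ for a closed $n$-manifold.
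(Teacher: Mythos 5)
Your reduction of the main assertion is exactly the paper's route: the corollary is an immediate consequence of Theorem \ref{theorem1}, whose implication (a) $\Rightarrow$ (b) is Lemma \ref{lemma2}, read contrapositively against the definition of $n$-tightness; the paper offers no further argument for this part, and none is needed. The issue is in your verification that a closed $n$-manifold is $n$-tight (which the paper merely asserts, in the remark following the corollary). You claim that for \emph{any} $n$-disc $D^n$ embedded in a closed $n$-manifold $X$, the set $X \setminus \operatorname{int} D^n$ is a compact $n$-manifold with nonempty boundary. That is true for locally flat discs, but the definition of $n$-tight quantifies over \emph{all} embedded $n$-discs, and for $n \ge 3$ discs can be wild: the Alexander horned ball $B \subset S^3$ is a genuine topological $3$-cell, yet $S^3 \setminus \operatorname{int} B$ (the closure of the bad complementary domain of the horned sphere) is not a manifold with boundary --- it fails to be locally simply connected at the points of the wild Cantor set. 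So the step ``the complement is a manifold with boundary, hence its top cohomology vanishes'' breaks down in general, and with it your proof that $H^n(i)$ is never an isomorphism.

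The conclusion is nevertheless correct, and the repair is short in the paper's own setting (\v{C}ech cohomology). One option: for any proper compact subset $A$ of a connected $n$-manifold $X$ one has $H^n(A;\mathbb{Z})=0$, since by Poincar\'e--Alexander--Lefschetz duality $H^n(A;\mathbb{Z}) \cong H_0(X, X\setminus A;\Gamma)$ ($\Gamma$ the orientation system), and this relative group vanishes because $X\setminus A \neq \emptyset$ and $X$ is connected; apply this with $A = X \setminus \operatorname{int} D^n$ and compare with $H^n(X)\neq 0$ as you do. Alternatively, avoid duality: by strong excision in \v{C}ech theory, $H^n(X, X\setminus \operatorname{int} D^n) \cong H^n(D^n,\partial D^n) \cong \mathbb{Z}$, and the image of a generator in $H^n(X)$ is the image of a local top class, which is nonzero for $X$ closed (a generator of $\mathbb{Z}$ in the orientable case, the nontrivial element of $\mathbb{Z}/2$ otherwise); by exactness of the pair sequence, $H^n(i)$ then has nontrivial kernel, so it is not an isomorphism. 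Either patch covers wild discs; as written, your argument does not.
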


\


\begin{obs} In \cite{GonOre} it was proved that, given a map $f: M \to N$, where $M , N$ are manifolds of same dimension with $M$ closed, if the absolute degree of $f$ is different from $1$ then the set of multiple points of any map $g \in [f]$ is dense. Note that a closed manifold does not satisfy the condition (b) in Theorem \ref{theorem1}. Therefore, does not satisfy the condition (a).  So using   Theorem \ref{theorem1}, we conclude that for every map $f: M \to N$ such that  $H^{n}(f): H^n(N) \to H^n(M) $ is null then the set of multiple points $\mathcal{M}(f)$ is dense in $M$.

\end{obs}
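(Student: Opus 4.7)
\medskip

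\noindent\textbf{Proof proposal.} The first assertion is a direct contrapositive application of Theorem~\ref{theorem1}. Assume, for contradiction, that some $f:X\to M$ with $H^n(f)=0$ has a non-dense multiple point set $\mathcal{M}(f)$. Then condition (a) of Theorem~\ref{theorem1} holds for this $f$, so by the equivalence the condition (b) holds, i.e.\ there is an $n$-disc $D^n\subset\mathcal{E}_n(X)$ with $H^n(i):H^n(X)\to H^n(X\setminus\operatorname{int} D^n)$ an isomorphism. This contradicts $n$-tightness of $X$, completing the proof of the first statement.

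For the ``in particular'' clause, it suffices to verify that every closed $n$-manifold $X$ is $n$-tight, for then the first part applies. Given any $n$-disc $D^n\subset X=\mathcal{E}_n(X)$, I plan to show that
\[
H^n(X)\neq 0\quad\text{while}\quad H^n(X\setminus\operatorname{int} D^n)=0,
\]
so that the inclusion-induced map $H^n(i)$ cannot be an isomorphism. For the first inequality, recall that for a closed connected $n$-manifold $H^n(X;\Z)$ is either $\Z$ (orientable case) or $\Z/2$ (non-orientable case, via the universal coefficient theorem applied to the torsion in $H_{n-1}(X;\Z)$); in either case the group is non-trivial.

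For the second equality, the complement $Y:=X\setminus\operatorname{int} D^n$ is a compact $n$-manifold with non-empty boundary $\partial D^n\cong S^{n-1}$. Such a manifold has the homotopy type of a CW complex of dimension at most $n-1$ (one may invoke the standard handle decomposition of a compact manifold with boundary, or, alternatively, Poincar\'e--Lefschetz duality together with $H_0(Y,\partial Y)=0$ for connected $Y$ with non-empty boundary). Either way we obtain $H^n(Y)=0$, finishing the verification.

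The one step that deserves a little care is the vanishing $H^n(X\setminus\operatorname{int} D^n)=0$ in the non-orientable and non-smooth setting; I would handle it by the duality argument, which avoids any smoothness assumption and works uniformly for orientable and non-orientable $X$. Once this is in place, the corollary follows immediately from Theorem~\ref{theorem1}.
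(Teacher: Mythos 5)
Your proposal is correct and takes essentially the same route as the paper: the remark (like Corollary~\ref{corol1}) is proved there simply by citing Theorem~\ref{theorem1} and asserting that a closed manifold fails condition (b), and you supply precisely the verification of that assertion by comparing $H^n(X)\neq 0$ (namely $\Z$ or $\Z/2$ according to orientability) with $H^n\bigl(X\setminus \operatorname{int} D^n\bigr)=0$. The only step to phrase carefully is the duality argument in the non-orientable case, where Poincar\'e--Lefschetz duality must be taken with orientation-twisted coefficients, $H^n(Y;\Z)\cong H_0(Y,\partial Y;\Z_w)$, which still vanishes for connected $Y$ with non-empty boundary, so your argument goes through as stated.
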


\subsection{Construction of complexes that are not $n$-tight}

Here, we will analyze when a complex $K$ is not $n$-tight, i.e. when a complex $K$ satisfies the item (b) of theorem \ref{theorem1}. Let $K = K^{(n-1)} \cup e_1^n \cup \cdots \cup e_m^n$, where $K^{(n-1)}=S_1^{n-1} \vee  S_2^{n-1} \vee \cdots \vee S_k^{n-1}$ and the attaching map $ f_i: S_i \to S_1^{n-1} \vee  S_2^{n-1} \vee \cdots \vee S_k^{n-1}$ for the $n$-cell $e_i$ is given by the $k$-tuple of integers $(n_1^i, \ldots , n_k^i)$, $i=1, \ldots , m$. We want to determine if  $H^{n}(i): H^n(K) \to H^n(K  \setminus \stackrel{\circ}{D^n})$  is an isomorphism for some $n$-disc $D \subset \mathcal{E}_n(K)$. Note that it is the same than to determine if
$H^n(K \setminus \stackrel{\circ}{e}_i  \hookrightarrow K)  $ is an isomorphism for some $n$-cell $e_i$.

Let us focus on the $n$-cell $e_m$: Note that $$H^n(K) = \underbrace{\mathbb{Z} \oplus \cdots \oplus \mathbb{Z}}_{m-\text{times}} / \langle  \bar{n}_1 , \ldots , \bar{n}_k \rangle,$$ where $\bar{n}_1=(n_1^1, n_1^2, \ldots , n_1^{m-1}, n_1^m)$, $\bar{n}_2=(n_2^1, n_2^2, \ldots , n_2^{m-1}, n_2^m), \ldots , \\ \bar{n}_k=(n_k^1, n_k^2, \ldots , n_k^{m-1}, n_k^m)$, and $$H^n(K \setminus \stackrel{\circ}{e}_m) = \underbrace{\mathbb{Z} \oplus \cdots \oplus \mathbb{Z}}_{(m-1)-\text{times}} / \langle  \tilde{n}_1 , \ldots , \tilde{n}_k \rangle,$$ where $\tilde{n}_1=(n_1^1, n_1^2, \ldots , n_1^{m-1})$, $\tilde{n}_2=(n_2^1, n_2^2, \ldots , n_2^{m-1}), \ldots , \\\tilde{n}_k=(n_k^1, n_k^2, \ldots , n_k^{m-1})$. Moreover, the homomorphism $$ H^n(K) \to H^n (K \setminus \stackrel{\circ}{e}_m)$$ is given by the projection $$ \overline{ (a_1, \ldots , a_m)} \mapsto   \overline{ (a_1, \ldots , a_{m-1})}.$$ The above homomorphism is always onto and we want to characterize when it is also injective. 

\begin{prop} For the complex $K$ defined above, the homomorphism $H^n(K) \to H^n (K \setminus \stackrel{\circ}{e}_m)$  is injective iff there are integers $\beta_1, \ldots , \beta_k $ such that 
\begin{equation}\label{equation1}
\beta_1 \bar{n}_1 + \cdots + \beta_k \bar{n}_k = (0, \ldots , 0, 1).
\end{equation}

\end{prop}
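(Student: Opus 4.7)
The projection $\pi : H^n(K) \to H^n(K \setminus \stackrel{\circ}{e}_m)$ is given on representatives by dropping the last coordinate, so my plan is to identify $\ker \pi$ explicitly, recognize it as a cyclic subgroup generated by a single class, and then translate injectivity into a solvability statement over $\Z$.

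\textbf{Step 1: Describe the kernel.} A class $\overline{(a_1,\ldots,a_m)} \in H^n(K)$ lies in $\ker \pi$ iff $(a_1,\ldots,a_{m-1}) \in \langle \tilde n_1,\ldots,\tilde n_k\rangle$, i.e.\ there are integers $\alpha_1,\ldots,\alpha_k$ with $(a_1,\ldots,a_{m-1}) = \sum_i \alpha_i \tilde n_i$. Setting $c := a_m - \sum_i \alpha_i n_i^m$, one has
\[
(a_1,\ldots,a_m) \;-\; \sum_{i=1}^k \alpha_i \bar n_i \;=\; (0,\ldots,0,c),
\]
so in $H^n(K)$ we have $\overline{(a_1,\ldots,a_m)} = \overline{(0,\ldots,0,c)}$. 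Conversely, every class of the form $\overline{(0,\ldots,0,c)}$ projects to $0$. Hence $\ker \pi$ equals the image of the homomorphism $\Z \to H^n(K)$ sending $c \mapsto \overline{(0,\ldots,0,c)}$, a cyclic subgroup generated by $\overline{(0,\ldots,0,1)}$.

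\textbf{Step 2: Translate vanishing of $\ker\pi$.} Because $\overline{(0,\ldots,0,c)} = c\cdot \overline{(0,\ldots,0,1)}$, the kernel is trivial if and only if $\overline{(0,\ldots,0,1)} = 0$ in $H^n(K)$. By definition of the quotient, this is equivalent to $(0,\ldots,0,1) \in \langle \bar n_1,\ldots,\bar n_k\rangle$, i.e.\ to the existence of integers $\beta_1,\ldots,\beta_k$ with $\beta_1\bar n_1 + \cdots + \beta_k\bar n_k = (0,\ldots,0,1)$, which is equation~(\ref{equation1}).

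\textbf{Main obstacle.} There is no serious obstacle beyond being careful in Step~1 that the manipulation, which adds an integer combination of the relations $\bar n_i$ to reduce a kernel representative to the form $(0,\ldots,0,c)$, is legitimate inside the quotient $H^n(K) = \Z^m/\langle \bar n_1,\ldots,\bar n_k\rangle$; the only thing one must notice is that the parameter $c$ is unconstrained (for any choice of $a_m$ it can be made arbitrary), so the generator of $\ker\pi$ is indeed $\overline{(0,\ldots,0,1)}$ rather than some proper multiple.
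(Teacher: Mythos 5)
Your proposal is correct and takes essentially the same route as the paper: the paper's proof also identifies the kernel of the projection as the cyclic subgroup generated by $\overline{(0,\ldots,0,1)}$ and then observes that injectivity is equivalent to this class vanishing in $H^n(K)$, i.e.\ to the solvability of equation~(\ref{equation1}) over $\Z$. The only difference is that your Step~1 writes out in detail the kernel computation that the paper dismisses with ``it is clear.''
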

\begin{proof} It is clear that the kernel of the homomorphism $H^n(K) \to H^n (K \setminus \stackrel{\circ}{e}_m)$ is generated by the class $\overline{ (0, \ldots , 0, 1)}$. Therefore $H^n(K) \to H^n (K \setminus \stackrel{\circ}{e}_m)$ is injective iff the class $\overline{ (0, \ldots , 0, 1)}$ is null in $\displaystyle H^n(K) = \underbrace{\mathbb{Z} \oplus \cdots \oplus \mathbb{Z}}_{m-\text{times}} / \langle  \bar{n}_1 , \ldots , \bar{n}_k \rangle$, which is equivalent to the existence of integers  $\beta_1, \ldots , \beta_k $ such that  $\beta_1 \bar{n}_1 + \cdots + \beta_k \bar{n}_k = (0, \ldots , 0, 1)$.

\end{proof}

\begin{obs} The equation (\ref{equation1}) has an integer solution iff we can get,  from the $m \times k$ matrix $$
\left[\begin{array}{cccc}
n_1^1&n_2^1&\ldots &n_k^1 \\
\vdots & \vdots & \vdots & \vdots \\
n_1^m & n_2^m & \ldots & n_k^m
\end{array}\right]
$$ by applying column elementary operations, a matrix such that the last column is of the form $
\left[\begin{array}{c}
0 \\
\vdots  \\
0\\
1
\end{array}\right]
$.

\end{obs}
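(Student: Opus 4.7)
The plan is to reformulate the statement as a fact about the $\mathbb{Z}$-column span of the matrix
$$
A \;=\; (\bar n_1 \mid \bar n_2 \mid \cdots \mid \bar n_k) \in M_{m\times k}(\mathbb{Z}),
$$
and then exploit the standard dictionary between column elementary operations and right multiplication by elements of $GL_k(\mathbb{Z})$. Concretely, a column elementary operation on $A$ is the right multiplication $A \mapsto AE$ where $E$ is an elementary unimodular matrix; conversely every $U \in GL_k(\mathbb{Z})$ is a product of such elementary matrices, so the accessible matrices are exactly $\{AU : U \in GL_k(\mathbb{Z})\}$. Under this identification, ``after column operations the last column equals $e_m := (0,\ldots,0,1)^{T}$'' is the same as ``there exists $U \in GL_k(\mathbb{Z})$ with $A U e_k = e_m$'', i.e.\ the last column of $U$, call it $\beta := U e_k$, satisfies $A\beta = e_m$. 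That is already the equation \eqref{equation1}.

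For the direction $(\Leftarrow)$ this is immediate: if column operations produce a matrix with last column $e_m$, take $\beta$ to be the last column of the corresponding $U \in GL_k(\mathbb{Z})$ and one obtains $\beta_1 \bar n_1 + \cdots + \beta_k \bar n_k = e_m$. For the direction $(\Rightarrow)$, I would start from a solution $\beta = (\beta_1,\ldots,\beta_k)^T \in \mathbb{Z}^k$ of \eqref{equation1}. The main observation is that $d := \gcd(\beta_1,\ldots,\beta_k)$ must divide every coordinate of $A\beta = e_m$, forcing $d = 1$; that is, $\beta$ is a primitive vector in $\mathbb{Z}^k$. The key step (the only substantive one in the argument) is then to extend such a primitive vector to a $\mathbb{Z}$-basis of $\mathbb{Z}^k$.

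For that step I would argue via the Euclidean algorithm: applied to the entries of $\beta$, it produces a sequence of column operations on the row $\beta^{T}$ (equivalently, right multiplication by elementary unimodular matrices) that reduce $\beta^{T}$ to $(1,0,\ldots,0)$. Packaging these operations into a single $V \in GL_k(\mathbb{Z})$ gives $\beta^{T}V = e_1^{T}$, hence $\beta = (V^{T})^{-1} e_1$, so $\beta$ is the first column of the unimodular matrix $(V^{T})^{-1}$. Finally, post-multiplying by a cyclic column permutation $P$ (itself unimodular) moves $\beta$ into the last column, producing $U := (V^{T})^{-1} P \in GL_k(\mathbb{Z})$ with last column $\beta$. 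Then $AU$ has last column $A\beta = e_m$, so the matrix $A$ is transformed by a sequence of column elementary operations to a matrix whose last column is $(0,\ldots,0,1)^T$, as required.

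The main obstacle, and essentially the only nontrivial content, is the completion of a primitive vector to a basis of $\mathbb{Z}^k$; everything else is bookkeeping in the correspondence between column operations and $GL_k(\mathbb{Z})$. I will handle it via the explicit Bezout/Euclidean reduction above, which has the advantage of making the column operations producing the desired form effectively computable from any given $\beta$ solving \eqref{equation1}.
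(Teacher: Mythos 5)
Your proof is correct. Note, however, that the paper itself offers no proof of this Remark at all: it is stated as a throwaway observation, implicitly treated as the standard dictionary between integer column operations and right multiplication by products of elementary matrices in $GL_k(\mathbb{Z})$. Your write-up supplies exactly the content that dictionary requires and does so soundly: the direction $(\Leftarrow)$ is, as you say, immediate from reading off the last column of $AU$; for $(\Rightarrow)$ the two genuine ingredients are (i) primitivity of the solution vector $\beta$, which you get correctly since $\gcd(\beta_1,\dots,\beta_k)$ divides every coordinate of $A\beta = (0,\dots,0,1)^{T}$ and hence divides $1$, and (ii) completion of a primitive vector to a $\mathbb{Z}$-basis, which you realize constructively by Euclidean reduction $\beta^{T}V = e_1^{T}$ with $V$ a product of elementary matrices. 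One point worth making explicit, though entirely routine: to conclude that $U = (V^{T})^{-1}P$ is itself reachable by column operations you should observe that transposes and inverses of elementary matrices over $\mathbb{Z}$ are again elementary (and that a column swap, or your cyclic permutation $P$, is a product of transvections and a sign change), so that $U$ is a product of elementary matrices without invoking the general generation theorem for $GL_k(\mathbb{Z})$; your explicit construction then makes the proof self-contained and even algorithmic, which is a small gain over simply citing the standard fact, as the paper tacitly does.
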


\begin{corol} The complex $K$ defined above is not $n$-tight if there is $i\in \{1, \ldots , m\}$ such that the equation
\begin{equation}\label{equation2}
\beta_1 \bar{n}_1 + \cdots + \beta_k \bar{n}_k = (0, \ldots , 0, 1, 0, \ldots , 0)
\end{equation}
has integer solution.

\end{corol}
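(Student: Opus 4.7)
The plan is to deduce the Corollary from the preceding Proposition by two elementary reductions: a relabeling of the $n$-cells of $K$ that reduces the statement to the case $i=m$, followed by a deformation-retraction argument that passes from the open cell $\stackrel{\circ}{e}_m$ to a small $n$-disc inside it, which is what the definition of $n$-tight demands.

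For the first reduction, since the indexing of the $n$-cells $e_1,\ldots,e_m$ is arbitrary, I permute them so that the distinguished cell $e_i$ becomes the last one; applying the same permutation to the coordinates of each $\bar{n}_j$, the equation~(\ref{equation2}) turns into the equation $\beta_1\bar{n}_1+\cdots+\beta_k\bar{n}_k=(0,\ldots,0,1)$ appearing in the preceding Proposition. The Proposition then gives that $H^n(K)\to H^n(K\setminus\stackrel{\circ}{e}_i)$ is injective; since this map is always surjective, as explicitly noted in the paragraph preceding the Proposition, it is an isomorphism.

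For the second reduction, I pick $D^n$ to be any closed $n$-disc contained in the open cell $\stackrel{\circ}{e}_i$. Since $\stackrel{\circ}{e}_i$ is homeomorphic to $\R^n$, each of its points has a Euclidean neighborhood in $K$, so $D^n\subset\stackrel{\circ}{e}_i\subset\mathcal{E}_n(K)$. The open annular region $\stackrel{\circ}{e}_i\setminus\mathrm{int}\,D^n$ deformation retracts radially (via the characteristic map of $e_i$) onto the image of the attaching sphere, and extending this deformation by the identity on $K\setminus\stackrel{\circ}{e}_i$ yields a strong deformation retraction of $K\setminus\mathrm{int}\,D^n$ onto $K\setminus\stackrel{\circ}{e}_i$. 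The inclusion therefore induces an isomorphism $H^n(K\setminus\mathrm{int}\,D^n)\cong H^n(K\setminus\stackrel{\circ}{e}_i)$, and combining with the first reduction yields that $H^n(K)\to H^n(K\setminus\mathrm{int}\,D^n)$ is an isomorphism; this says precisely that $K$ is not $n$-tight.

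The only mildly delicate point is the continuity of the extended deformation retraction on all of $K$; it follows from the CW topology, because the deformation is nontrivial only on the closed cell $e_i$ and it fixes the topological boundary of that cell pointwise, so the two pieces match up continuously. Alternatively, one can bypass the explicit retraction by invoking the Remark preceding the Corollary, which furnishes the equivalence between the disc map and the corresponding component map of $\mathcal{E}_n(K)$.
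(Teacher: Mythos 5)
Your proof is correct and takes essentially the same route as the paper, which states the Corollary without a separate proof: it follows from the Proposition after relabeling the cells, combined with the paper's earlier (unproved) assertion that removing a small $n$-disc $D^n \subset \stackrel{\circ}{e}_i$ is cohomologically equivalent to removing the whole open cell, which you correctly justify via the standard deformation retraction of $K \setminus \operatorname{int} D^n$ onto $K \setminus \stackrel{\circ}{e}_i$. One small caution about your closing aside: the Remark compares the disc with a \emph{component} of $\mathcal{E}_n(K)$, which may be strictly larger than the open cell $\stackrel{\circ}{e}_i$, so it is not a drop-in replacement for your deformation-retraction step --- but your main argument does not depend on it.
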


\section{Example of maps with the set of single points dense}

In \cite{GonOre},  it is constructed a map $g: D^2 \to \R^2$ such that the set of single points is dense in $D^2$ and $g$ is constant on the boundary of $D^2$.  Then, from $g$ it is obtained an example of a map $f: S^2 \to \R^2$ with the set of single points dense in $S^2$. 
We noticed  that the techniques used by S. Orevkov  in his construction works to construct such maps $g: D^n \to \R^2$ for $n \geq 2$ and, therefore, to construct  maps $f: S^n \to \R^2$, $n \geq 2$, with dense set of single points.  In case of Orevkov's example, a crucial step is the existence of a map $\varphi : D^2 \to A$, where $A=\{ z\in \R^2 \ | \ a \leq |z| \leq b\}$ is a $2$-dimensional annulus ($0<a<b$), such that $\varphi(D^2)=A$ and $\varphi$ mapping the interior of $D^2$ homeomorphically onto a dense open subset of $A$.

Below, we prove the equivalent of the above assertion for the $n$-dimensional unitary disc $D^n$ and an $n$-dimensional annulus.

\begin{lemma}\label{lemma4.1} Let $D^n$ be the $n$-dimensional disc and $A^n=S^{n-1} \times I$ the $n$-annulus. There is a map $\varphi: D^n \to A^n$ such that $\varphi(D^n)=A^n$ and $\varphi$ maps the interior of $D^n$ homeomorphically onto an open dense subset of $A^n$.
\end{lemma}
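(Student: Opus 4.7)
The plan is to realize $\varphi$ as a simple product construction built from a pinch map. Since both $D^n$ and $D^{n-1} \times [0,1]$ are topological $n$-cells, I first fix any homeomorphism $h: D^n \to D^{n-1} \times [0,1]$. Because a homeomorphism of topological manifolds with boundary preserves the manifold boundary (and hence the manifold interior), $h$ restricts to a homeomorphism $\operatorname{int}(D^n) \to \operatorname{int}(D^{n-1}) \times (0,1)$.

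Next, I construct a continuous surjection $\pi: D^{n-1} \to S^{n-1}$ that collapses $\partial D^{n-1}$ to a single point $p \in S^{n-1}$ and restricts to a homeomorphism $\operatorname{int}(D^{n-1}) \to S^{n-1} \setminus \{p\}$. Concretely, I compose the radial homeomorphism $\operatorname{int}(D^{n-1}) \to \R^{n-1}$ given by $x \mapsto x/(1-|x|)$ with the inverse of stereographic projection $\R^{n-1} \to S^{n-1} \setminus \{p\}$, and then extend by setting $\pi \equiv p$ on $\partial D^{n-1}$. Continuity at the boundary is immediate since $|x/(1-|x|)| \to \infty$ as $|x| \to 1^{-}$, and stereographic projection carries $\infty$ to $p$.

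Finally, I set
\[
\varphi := (\pi \times \operatorname{id}_{[0,1]}) \circ h : D^n \longrightarrow S^{n-1} \times [0,1] = A^n.
\]
Continuity and surjectivity of $\varphi$ follow from those of $h$ and $\pi$. Restricted to $\operatorname{int}(D^n)$, $\varphi$ is the composition of two homeomorphisms, landing on the open subset $U := (S^{n-1} \setminus \{p\}) \times (0,1) \subset A^n$. The complement $A^n \setminus U = (\{p\} \times [0,1]) \cup (S^{n-1} \times \{0,1\})$ is a closed subset of topological dimension $n-1$, hence nowhere dense in the $n$-manifold $A^n$; so $U$ is open and dense.

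The construction is essentially routine; the only mildly delicate point is verifying that $\pi|_{\operatorname{int}(D^{n-1})}$ is a genuine homeomorphism (i.e., that its inverse is continuous) onto $S^{n-1} \setminus \{p\}$, rather than merely a continuous bijection. This is handled explicitly by the stereographic description, whose inverse is manifestly a smooth map of open subsets of Euclidean spaces.
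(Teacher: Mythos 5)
Your proposal is correct and takes essentially the same route as the paper, which also identifies $D^n$ with $D^{n-1}\times I$ and sets $\varphi(x,t)=(p(x),t)$ where $p:D^{n-1}\to S^{n-1}$ collapses $\partial D^{n-1}$ to a point. You simply make explicit the details the paper leaves implicit: the boundary-preserving property of the identifying homeomorphism, the stereographic construction showing the pinch map restricts to a homeomorphism on the interior, and the openness and density of $(S^{n-1}\setminus\{p\})\times(0,1)$ in $A^n$.
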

\begin{proof}
Let's identify the disc $D^n$ with $D^{n-1}\times I$, and let $p: D^{n-1} \to S^{n-1}$ be the map that collapses the boundary of $D^{n-1}$ into a point.  Then, the map $\varphi: D^{n-1} \times I \to S^{n-1}\times I$ given by $\varphi(x,t)=(p(x), t)$ satisfy the conditions of the lemma.

\end{proof}

\

The same type of example given by S. Orevkov holds for maps from $S^n$ into $\R^2$ for $n\geq 2$. 

\begin{theorem}\label{Orevkov} There is a map $g: D^n \to T \subset \R^2$ such that:
\begin{itemize}
\item $T$ is a fractal tree;
\item $g(S^{n-1})=\{0\}$; and
\item The set of single points of $g$ is dense in $D^n$.
\end{itemize}
\end{theorem}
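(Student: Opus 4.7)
The plan is to adapt Orevkov's construction for $n=2$ from \cite{GonOre}, using the map $\varphi: D^n \to S^{n-1} \times I$ of Lemma \ref{lemma4.1} in place of the $2$-dimensional disc-to-annulus map that is its crucial building block. The target tree $T$ is built by a standard recursive process: start with the singleton $\{0\}$; at each generation $k$, attach to every current leaf a planar star of $m \geq 2$ segments of length $\ell_k$, aimed in distinct directions so the resulting configuration is non-self-intersecting, where $\ell_0 > \ell_1 > \cdots$ is a summable sequence of positive numbers. The closure of the union of all these configurations is a compact fractal tree $T \subset \R^2$ rooted at $0$.

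The map $g$ is built by a matching recursive decomposition of $D^n$. Take a decreasing sequence of closed concentric balls $D^n = B_0 \supset B_1 \supset \cdots$ with $\bigcap_k B_k = \{0\}$, and identify each closed shell $C_k := \overline{B_k \setminus \mathrm{int}\, B_{k+1}}$ with a disjoint union of $m^k$ copies of $D^n$, one per leaf of the generation-$k$ subtree of $T$. On each such copy, define $g$ as the composition of $\varphi$ from Lemma \ref{lemma4.1} with the edge map Orevkov used in the $n=2$ case, collapsing the outer sphere to the parent leaf and distributing the inner sphere into the configuration of $m$ new edges and their endpoints. The inner boundary of each copy in $C_k$ is forced to agree with the outer boundary of the corresponding copies in $C_{k+1}$, both collapsing to the common tree vertex. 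Extending by $g(0) := 0$, which continuity together with summability of $\ell_k$ forces, produces a continuous surjection $g: D^n \to T$ with $g(S^{n-1}) = \{0\}$.

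Density of the single-point set then follows because on each shell-component the single-point set of $\varphi$ contains the open dense subset $\mathrm{int}(D^{n-1}) \times I$ of $D^{n-1} \times I$ (by Lemma \ref{lemma4.1}); composing with Orevkov's edge map only contributes further multiple points on a closed nowhere dense subset of each shell, so the single points of $g$ restricted to each shell remain dense there, and their union is dense in $D^n$. The main technical obstacle --- and the very reason Lemma \ref{lemma4.1} was proved --- is to produce the $n$-dimensional analogue of Orevkov's disc-to-annulus map with that injective-interior property; once that is in hand, the remaining work is the same bookkeeping as in the $n=2$ case, amounting to careful choices of how to glue the shell maps across matching boundary spheres and to ensure that distinct edges of $T$ meet only at shared endpoints, so that no spurious identifications collapse two would-be single points into a common image.
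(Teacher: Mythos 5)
Your proposal fails at its central step: the density argument cannot work even in principle. The target $T$ is one-dimensional, so for $n \geq 2$ no map $g: D^n \to T$ can be injective on any open subset of $D^n$; consequently the set of single points of $g$ always has empty interior, and your claim that on each shell it contains a dense \emph{open} subset is false. The injective-interior property of $\varphi$ from Lemma \ref{lemma4.1} does not transfer to $g$, because in the construction $g$ collapses each $n$-dimensional annulus onto a segment of $T$, making essentially every point a multiple point. In the paper's proof (following Orevkov) density comes from a mechanism you omit entirely: at each stage of the recursion one chooses a distinguished point $p_{\vec{b}} \in \operatorname{Int} A_{\vec{b}} \cap U_k$, where $\{U_k\}$ is a countable base for $D^n$ and $k$ is minimal among indices not yet used; the map sends $p_{\vec{b}}$ to the free end $e_{\vec{b}}$ of the corresponding edge of $T$, and these ends have no other preimages, so $P = \{p_{\vec{b}}\}$ is a dense set (Lemma \ref{lemma4.5}) consisting of single points. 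The role of Lemma \ref{lemma4.1} is only to propagate the recursive decomposition (each $\varphi_{\vec{b}}$ re-embeds the whole annulus family densely inside $A_{\vec{b}}$), not to supply single points directly. Without the distinguished points and the countable-base bookkeeping, your construction has no source of single points at all.

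Your decomposition is also structurally wrong. A closed shell $C_k \cong S^{n-1} \times I$ is connected, so it cannot be identified with a disjoint union of $m^k$ copies of $D^n$; and, more seriously, since $\bigcap_k B_k = \{0\}$, all deep generations of your recursion are confined to arbitrarily small neighborhoods of the origin, so whatever candidate single points the limit map could have (preimages of the limit ends of your finitely branching tree) accumulate only at $0$ and are certainly not dense in $D^n$. The paper avoids this by using the Cantor map $F$: the first-stage annuli $A_b = \{x \in D^n : F(|x|) = y(b)\}$, indexed by \emph{all} almost-null binary sequences $b$, are spread densely throughout $D^n$, and the recursion via $\varphi_{\vec{b}}$ reproduces this dense family inside every annulus, so the union of stage-$m$ annuli remains dense at every stage. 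Correspondingly, Orevkov's tree has branch nodes dense along every edge (infinitely many branches attached along each segment, at the points $a_{\vec{b}}$ parametrized by dyadic values $y(b)$), rather than an $m$-fold star attached only at leaves. It is the matching of these two everywhere-dense recursive structures that makes both $A$ and $P$ dense and the uniform-limit argument deliver the stated map.
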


\

\begin{corol} There is a map $f: S^n \to T \subset \R^2$ such that the set of single points of $f$ is dense in $S^n$.
\end{corol}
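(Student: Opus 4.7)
The plan is to obtain $f: S^n \to T$ from the map $g: D^n \to T$ of Theorem~\ref{Orevkov} by factoring through the quotient $D^n / S^{n-1} \cong S^n$. Since $g$ is constant on $S^{n-1}$ with value $0$, it descends to a continuous map $\tilde g: S^n \to T$ satisfying $\tilde g \circ p = g$, where $p: D^n \to D^n / S^{n-1}$ is the quotient map. I will set $f := \tilde g$, so that $f(S^n) = g(D^n) = T \subset \R^2$.

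A key preliminary observation is that every single point $x$ of $g$ must satisfy $g(x) \ne 0$. Indeed, $g^{-1}(0) \supset S^{n-1}$ contains more than one point, so every preimage of $0$ is automatically a multiple point of $g$. Therefore the (dense) set of single points of $g$ lies entirely in $\operatorname{int} D^n \setminus g^{-1}(0)$, and on $\operatorname{int} D^n$ the quotient $p$ restricts to a homeomorphism onto $S^n \setminus \{[S^{n-1}]\}$.

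Next, for such a single point $x$ of $g$, I will verify that $p(x)$ is a single point of $\tilde g$. Since $g(x) \ne 0$, no point of $S^{n-1}$ belongs to $g^{-1}(g(x))$, so
\[
\tilde g^{-1}(\tilde g(p(x))) = p\bigl(g^{-1}(g(x))\bigr) = p(\{x\}) = \{p(x)\}.
\]
Because $p$ is a homeomorphism on $\operatorname{int} D^n$ and single points of $g$ are dense there, their $p$-images form a dense subset of $S^n \setminus \{[S^{n-1}]\}$, which is an open dense subset of $S^n$. Hence the set of single points of $f = \tilde g$ is dense in $S^n$. No substantial obstacle arises here; the only delicate point is to check that single points of $g$ cannot map to $0$, which is immediate from $g(S^{n-1}) = \{0\}$ since $|S^{n-1}| > 1$.
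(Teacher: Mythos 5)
Your proposal is correct and follows exactly the paper's route: the paper's own (one-line) proof also obtains $f$ by collapsing $\partial D^n$ to a point, using that $g$ is constant on the boundary. Your additional verifications --- that single points of $g$ avoid $g^{-1}(0)$ and that $p$ is a homeomorphism on $\operatorname{int} D^n$, so density transfers --- are precisely the details the paper leaves implicit.
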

\begin{proof} The map $f: S^n \to \R^2$ is obtained from the map $g$ of the previous Theorem by collapsing the boundary of $D$ into a point since $g$ is constant on the boundary $\partial D$.

\end{proof}

\

More generally:

\begin{corol} Let $M^n$ be an $n$-dimensional connected manifold, $n \geq 2$. There is a map $f: M^n \to T \subset \R^2$ such that the set of single points of $f$ is dense in $M^n$.
\end{corol}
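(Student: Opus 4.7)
The plan is to reduce to Theorem~\ref{Orevkov} by partitioning a dense open subset of $M^n$ into countably many small disjoint closed discs, running the construction of Theorem~\ref{Orevkov} on each, and gluing by sending everything else to a common base point $0 \in \R^2$. The crucial feature is that $g(\partial D^n)=\{0\}$ allows independent copies to be wedged together at a single common value.

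First I would fix a countable dense subset $\{x_k\}_{k\ge 1}\subset M^n$ (available since any connected manifold is second countable) and inductively build pairwise disjoint closed $n$-discs $D_k\subset M^n$ as follows: if $x_k \in \bigcup_{j<k} D_j$, skip; otherwise pick a Euclidean chart around $x_k$ and take a closed disc $D_k$ inside it with $x_k\in\operatorname{int}(D_k)$, disjoint from the closed set $\bigcup_{j<k} D_j$. Every $x_k$ then lies in some $\operatorname{int}(D_j)$, so $\bigcup_k \operatorname{int}(D_k)$ is dense in $M^n$.

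For each $k$, Theorem~\ref{Orevkov} supplies $g_k:D_k\to T_k\subset\R^2$ with $g_k(\partial D_k)=\{0\}$ and the single points of $g_k$ dense in $D_k$. By rescaling and placing each $T_k$ into its own narrow sector emanating from the origin and contained in the ball $B(0,1/k)$, I would arrange that $\operatorname{diam}(T_k)\to 0$ and $T_j\cap T_k=\{0\}$ whenever $j\neq k$. Define
\[
f(x)=\begin{cases} g_k(x), & x\in D_k,\\ 0, & x\notin \bigcup_k D_k.\end{cases}
\]
Continuity is clear on each $\operatorname{int}(D_k)$; at any $x$ with $f(x)=0$ it suffices to note that a sequence $y_n\to x$ with $y_n\in D_{k_n}$ must have $k_n\to\infty$ (otherwise $x$ would lie in some closed $D_j$, except possibly the unique $j$ with $x\in\partial D_j$, where continuity of $g_j$ applies), hence $f(y_n)\in T_{k_n}$ with vanishing diameter, forcing $f(y_n)\to 0$.

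For density of single points, any open $U\subset M^n$ meets some $\operatorname{int}(D_k)$ by the density in Step~1, and Theorem~\ref{Orevkov} provides a single point $x_0$ of $g_k$ inside $U\cap\operatorname{int}(D_k)$. Since $\partial D_k\subset g_k^{-1}(0)$, no interior single point of $g_k$ can map to $0$, so $g_k(x_0)\in T_k\setminus\{0\}$; the disjointness $T_j\cap T_k=\{0\}$ then forces $f^{-1}(f(x_0))=\{x_0\}$, and $x_0$ is a single point of $f$. The main obstacle is really just the combinatorial placement in Step~3: fitting countably many fractal trees in $\R^2$ that meet solely at the common root while their diameters shrink to zero. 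This is solved by a standard sectors-and-scaling construction, so the geometric content of the argument is entirely in Theorem~\ref{Orevkov}.
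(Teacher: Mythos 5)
Your proof is correct in substance, but it takes a genuinely different route from the paper. The paper invokes the decomposition theorem of Doyle and Hocking \cite{DyH}: write $M = P \cup C$ with $P$ open and homeomorphic to $\R^n$, $C$ closed of dimension at most $n-1$, and $P\cap C=\emptyset$; then transport a \emph{single} copy of the map $g$ of Theorem \ref{Orevkov} through a homeomorphism $h: P \to D^n \setminus S^{n-1}$ and send all of $C$ to $0$, continuity at points of $C$ coming from $g(S^{n-1})=\{0\}$ and density of single points from density of $P$. You avoid \cite{DyH} entirely, replacing the one dense Euclidean cell by countably many disjoint chart discs whose interiors are dense, running Theorem \ref{Orevkov} once per disc, and wedging the targets at $0$ with diameters tending to zero --- which is exactly the diameter-shrinking continuity trick of Step 4 in the proof of Theorem \ref{Orevkov}, and the same wedge idea the paper itself uses (for finitely many trees) in Corollary \ref{corol4.6}, pushed to a countable wedge. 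What your version buys: it is elementary and applies verbatim to any space containing a dense family of Euclidean $n$-discs, not just manifolds. What it costs: the image is no longer the single tree $T$ but a countable wedge of shrinking trees (still a compact, tree-like set, so harmless for the purpose of the statement). Three small repairs are needed. First, literal rescaling does not shrink the angular extent of $T$ as seen from the origin (similarities preserve angles), so infinitely many rescaled-and-rotated copies of $T$ cannot occupy pairwise disjoint sectors; instead embed each copy by an angle-compressing homeomorphism, e.g.\ $(r,\theta)\mapsto (c_k r,\, \theta_k + \epsilon_k\theta)$ in polar coordinates --- this is legitimate because composing $g_k$ with a topological embedding preserves single points. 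Second, your claim that every $x_k$ lies in some $\operatorname{int}(D_j)$ fails when $x_k\in\partial D_j$, but the density of $\bigcup_k \operatorname{int}(D_k)$ survives, since boundary points of a closed disc are limits of interior points. Third, ``any connected manifold is second countable'' is false without the usual convention (the long line and the Pr\"ufer surface are connected manifolds that are not second countable); the paper's appeal to \cite{DyH} tacitly makes the same separability assumption, so this is a shared, not a new, hypothesis.
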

\begin{proof}From \cite{DyH}, $M=P^n \cup C$,   where $P^n$ is homeomorphic to the Euclidean $n$-space, $E^n$, and $C$ is a closed subset of $M^n$ of dimension at most $n-1$, and $P \cap C = \emptyset$. Let $h: P \to D^n - S^{n-1}$ be a homeomorphism and $g: D^n \to T \subset \R^2$ a map such that the set of single points of $g$ is dense in $D^n$ and $g(x)=0$ for all $x\in S^{n-1}$. Now, define $f: M \to T \subset \R^2$ by: $f(x)=g(h(x))$ if $x\in P$ and $f(x)=0$ if $x \in C$.

\end{proof}

\

\noindent {\it Proof of Theorem \ref{Orevkov}:} (Sketch) The construction of the map $g$ claimed in Theorem \ref{Orevkov} is obtained following the same steps as the map constructed by S. Orekov  in \cite[Appendix, Example]{GonOre}.  Consider  the following steps:

\begin{description}
\item[Step 1.] A decomposition of $D^n$ is defined;
\item[Step 2.] A particular tree $T \subset \R^2$ is considered;
\item[Step 3.] A uniformly Cauchy sequence of maps $g_m: D^n \to T$ is constructed in such way that, for each $m$, $g_m(\partial D)=0$;
\item[Step 4.] $g: D^n \to \R^2$ will be the limit of the sequence $\{g_m\}$.
\end{description}
\noindent where we briefly comment how to adapt the proof of the case $n=2$ for $n$ arbitrary.
\

The {\bf Step 1}(Construction of Annuli)  for $n=2$ uses  maps $\varphi: D^2\to  A_{\vec{b}}$ which satisfy 
\begin{enumerate}
\item $\varphi_{\vec{b}}(0)=p_{\vec{b}}$
\item $\varphi_{\vec{b}}(D^2)=A_{\vec{b}}$
\item $\varphi_{\vec{b}}(0)$ maps $IntD^2$ homeomorphically onto a dense open subset of  $A_{\vec{b}}$.
\end{enumerate}    

In our case such maps are replaced by the one given by the Lemma \ref{lemma4.1}, to obtain a similar decomposition 
using $n$-dimensional annulus. Also the analogue of Lemma 2.2, \cite{GonOre}, page 371, holds in our case, namely 
"$A$ and $P$ are dense in $D^n$" for the correspondent set $A$ in our case and $P$ is the same as in $n=2$.

The {\bf  Step 2},  the particular tree $T \subset \R^2$  considered is the same as in case $n=2$.
 For {\bf  Steps 3 and 4} the sequence of maps
are defined in a similar way as the maps $f_m$ as in the case $n=2$. So we obtain $g$ which is continuous, where the proof of the continuity is the same as for $f$ (see section 4. Construction of the Mapping  in  \cite{GonOre}, page 372).
This conclude the proof.
\qed

\

\begin{corol} \label{corol4.6} Let $X$ be an $n$-dimensional CW-complex with a finite number of $n$-cells, $e^n_1, \ldots , e^n_k$, attached to the $(n-1)$-skeleton $X^{(n-1)}$, $n \geq 2$. There is a map $f: X \to \R^2$ such that the set of single points of $f$ is dense in $X$.
\end{corol}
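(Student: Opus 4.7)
My approach is to apply Theorem~\ref{Orevkov} separately to each of the $k$ top-dimensional cells and paste the resulting maps together across the $(n-1)$-skeleton. First, in $\R^2$ I would pick $k$ copies $T_1,\ldots,T_k$ of the fractal tree $T$ of Theorem~\ref{Orevkov}, arranged (e.g.\ placed in different angular sectors around the origin) so that they share a common basepoint $0$ and are otherwise pairwise disjoint; this is possible because each $T_i$ is compact. Theorem~\ref{Orevkov} then supplies maps $g_i\colon D^n\to T_i\subset\R^2$ with $g_i(S^{n-1})=\{0\}$ whose sets of single points are dense in $D^n$.

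Let $\chi_i\colon D^n\to\overline{e_i^n}\subset X$ be the characteristic map of $e_i^n$. I would then define $f\colon X\to\R^2$ by
\[
f(x)=\begin{cases} g_i(\chi_i^{-1}(x)) & \text{if } x\in e_i^n,\\ 0 & \text{if } x\in X^{(n-1)}.\end{cases}
\]
Since $\chi_i(S^{n-1})\subset X^{(n-1)}$ and $g_i(S^{n-1})=\{0\}$, the prescription is consistent and satisfies $f\circ\chi_i=g_i$ on all of $D^n$; thus the restriction of $f$ to every closed $n$-cell is continuous, and combined with the constant value $0$ on $X^{(n-1)}$ the weak CW-topology yields continuity of $f$ on $X$.

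For the density of single points, the key observation is that if $y\in e_i^n$ with $f(y)\neq 0$, then the separation $T_i\cap T_j=\{0\}$ for $j\neq i$ together with $f(X^{(n-1)})=\{0\}$ forces $f^{-1}(f(y))\subset e_i^n$; hence $y$ is a single point of $f$ as soon as $\chi_i^{-1}(y)$ is a single point of $g_i$ with nonzero image. The set of such points is dense in $D^n$ (the single points of $g_i$ are dense in $D^n$, and $g_i^{-1}(0)$ can be arranged to have empty interior in Orevkov's construction), so the set of single points of $f$ is dense in each open $n$-cell $e_i^n$, and therefore in $\bigcup_{i=1}^{k} e_i^n$.

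The main obstacle I anticipate is extending the density from $\bigcup_i e_i^n$ to all of $X$. When $\bigcup_i\overline{e_i^n}=X$ (as for closed pseudomanifolds) the argument above is conclusive, but in general one must handle the cells of $X^{(n-1)}$ that lie in the boundary of no $n$-cell. I would treat this by induction on $n$: apply the inductive hypothesis to the "free" subcomplex of $X^{(n-1)}$ to obtain a map with dense single points taking values in a further tree $T'\subset\R^2$ meeting $T_1\cup\cdots\cup T_k$ only at $0$, and paste it in place of the constant $0$ on that part of $X^{(n-1)}$; the compatibility with the already-constructed $f$ on the attaching spheres of the $n$-cells (where everything is forced to equal $0$) is what must be checked carefully to close the induction.
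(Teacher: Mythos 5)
Your construction is essentially the paper's own proof: the authors likewise take copies $T_1,\dots,T_k$ of the tree with $T_i\cap T_j=\{0\}$ for $i\neq j$, use Theorem~\ref{Orevkov} to produce on each $n$-cell a map $g_i$ into $T_i$ that is constantly $0$ on the boundary and has dense single points, and define $f$ to be $g_i$ on $e_i^n$ and $0$ on $X^{(n-1)}$; your continuity check and the separation argument (a value in $T_i\setminus\{0\}$ forces $f^{-1}(f(y))\subset e_i^n$) are exactly what makes this work. One hedge of yours is unnecessary: in Orevkov's construction the dense set of single points is the set $P$ of distinguished points $p_{\vec b}$, which map to the ends $e_{\vec b}\neq 0$ of the tree, so single points with nonzero image are automatically dense and nothing extra about $g_i^{-1}(0)$ needs to be arranged.

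Where you go beyond the paper is your closing worry, and it is well founded: the argument only yields density of single points in $\bigcup_i \overline{e_i^n}$, and if $X$ has maximal cells of dimension $<n$, their interiors are open subsets of $X$ on which $f\equiv 0$, so no point there is single and the conclusion fails for this particular $f$. The paper's one-line proof silently assumes (in effect) that every point of $X$ lies in the closure of some $n$-cell; the general mixed-dimensional case is handled only afterwards, in item 3 of the remark following Corollary~\ref{corol4.6}, by decomposing $X$ into subcomplexes whose maximal cells have a common dimension and pasting (using a bouquet-of-circles map on the $1$-dimensional part) --- which is essentially the inductive repair you sketch, with the compatibility condition (value $0$ on intersections with the closed $n$-cells) being exactly the point to verify, as you say.
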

\begin{proof} Let $T_1, T_2, \ldots , T_k$ be copies of $T$ in $\R^2$ such that $T_i \cap T_j = \{ 0 \}$ for $i \neq j$. For the cell $e^n_i$, let $g_i : e^n_i \to T_i$ be a map such that $g_i$ restricts to the boundary $\partial e^n_i$  is constant equal zero, and the set of single points of $g_i$ is dense in $e^n_i$. Then, define $f: X \to \R^2$ by: $f(x)=g_i(x)$ if $x \in e^n_i$ and $f(x)=0$ if $x \in X^{(n-1)}$.
\end{proof}



It is natural to ask:   For which pairs of spaces $(X, \R^m)$, where $X$ is a finite CW-complex of dimension $n$ and $\R^m$ is the $m$-dimensional Euclidean space,   there exists  a map  $f:X \to \R^m$ such that  the set of single points of $f$ is  dense in $X$?  The examples above together with  few elementary remarks  show that we have a complete answer of the above question.  

   
   \begin{remark}
\begin{enumerate}   
\item  If $X$ is a graph, i.e., a CW-complex where  $dim(X)=1$, it is easy to see that such map $f: X \to \R$ exist if and only if $X$ is homeomorphic to the segment. This follows from the observation that the graph which is the letter $Y$ does not admit such map. 
   
   \item If $X$ is a graph, certainly there exist a such map $X \to \R^2$.  Take as image of $X$ a bouquet of circle. 
   
   \item  In the case  $m \geq 2$,   it suffices to construct such examples if $m=2$. Then the answer is yes,  and is obtained using the CW structure of $X$,  Corollary \ref{corol4.6} and item 2   above as follows: Write $X$ as the union (not disjoint union) of subcomplex where all  the maximal cell have the same dimension, then for each   such subcomplex apply either   Corollary \ref{corol4.6} or  item 2   above depending if the dimension of the subcomplex is $\geq 2$ or $1$, respectively. 
   
   \item There is {\bf no} map $f: D^2 \to \R$ for which the set of single points is dense in $D^2$. Indeed, let $f: D^2 \to \R$ be a map and, without loss of generality, suppose $0\in D^2$ a single point of $f$ and $f(0)=0 \in \R$. Then, $f(S^1)$ is a connected compact subset of $\R \setminus \{0\}$ and, therefore, it is entirely contained in one of the components of $\R \setminus \{0\}$, say in $\R^+$. Let $d$ be the distance of $0$ to $f(S^1)$. Thus, $d>0$. The subset $f^{-1}((-d/2, d/2))$ is an open subset of $D^2$ containing the origin. Let $\delta > 0$ such that the open ball $B(0, \delta)$ is contained in $f^{-1}((-d/2, d/2))$. We state that none point of $B(0, \delta) \setminus \{0\}$ is a single point of $f$. Let $z_0\in B(0, \delta)$, $z_0 \neq 0$. Then, $0<|f(z_0)|<d/2$. Note that $f(z_0)$ must be positive because the image of the annulus $\{z\in D^2 \ | \ |z_0| \leq |z| \leq 1\}$ is a connected compact subset of $\R \setminus \{0\} $. Now, take any radius in $D^2$ that does not contain the point $z_0$. The image of such radius is an interval with initial point $0$ and end point a number bigger then $d$. Hence, the image of some point belonging to this radius coincides with $f(z_0)$.  Therefore, $z_0$ is not a single point of $f$.
   
   \item Similarly, there is {\bf no} map $f: D^n \to \R$ for which the set of single points is dense in $D^n$, $n \geq 2$.

 \end{enumerate}

   \end{remark}

   \begin{remark}  In \cite{B}, it was shown that for $m , n \ge 3$ there is a monotone surjection $f: S^m \to S^n$ such that the set of single points of $f$ is  dense in $S^m$, where a map is  called
monotone provided each point-inverse is compact and connected. Moreover, it was shown that for $3 \le n \le 
   m \le 2n-3$, each element of $\pi_m(S^n) $ can be represented as a monotone surjection $f: S^m \to S^n$ with dense set of single points.

   Another natural question to ask is: For which pairs of spaces $(X, M)$ where $X$ is a finite CW-complex of dimension $n$ and $M$ is a $m$-dimensional manifold, there exists a surjective map $f: X \to M$ such that the set of single points of $f$ is  dense in $X$?  
   \end{remark}

 \
 
 \

\noindent Daciberg Lima Gon\c{c}alves\\
\noindent{\bf Department of Mathematics - IME\\
University of S\~ao Paulo\\
Rua~do~Mat\~ao~1010\\
~CEP:~05508-090 - S\~ao Paulo - SP - Brazil}\\
e-mail: \texttt{dlgoncal@ime.usp.br}

\

\noindent Tha\'is Fernanda Mendes Monis\\
\noindent{\bf Department of Mathematics - IGCE  \\
S\~ao Paulo State University (Unesp) \\
Av. 24 A ~ 1515 \\
~CEP:~13506-900 - Rio Claro - SP -  Brazil}\\
e-mail: \texttt{tfmonis@rc.unesp.br}

\

\noindent Stanis\l{a}w Spie\.z\\
\noindent{\bf Institute of Mathematics - Polish Academy of Sciences \\  IMPAN \\
ul. \'Sniadeckich ~8}\\
00-656 - Warsaw - Poland \\
e-mail: \texttt{spiez@impan.pl}


\begin{thebibliography}{99}

\bibitem{B} M. Bestvina: Essential dimension lowering mappings having dense deficiency set. {\it Trans. Amer. Math. Soc.} {\bf 287} (1985), no.2, 787--798.

\bibitem{CT} P. T. Church, J. G. Timourian: Deficient points of maps on manifolds. 
{\it Michigan Math. J.}  {\bf (27)} (1980), no. 3, 321-338. 

\bibitem{DyH} P. Doyle,  J. Hocking: A decomposition theorem for $n$-dimensional manifolds. 
 {\it Proc. Amer.   Math. Soc.}  {\bf (13) } (1962),  469-471.

\bibitem{Engelking} R. Engelking: Dimension theory. Translated from the Polish and revised by the author. North-Holland Mathematical Library, 19. North-Holland Publishing Co., Amsterdam-Oxford-New York; PWN-Polish Scientific Publishers, Warsaw, 1978.

\bibitem{Epstein} D. B. A., Epstein: The degree of a map. {\it Proc. London Math Soc.} {\bf 16} (1966), 369--383.

\bibitem{GonOre} D. L. Gon\c{c}alves: The size of multiple points of maps between manifolds, {\it with an Appendix written by S. Orevkov}. Topology proceedings 48 (2016), 361--373. 

\bibitem{HW} W. Hurewicz,  H. Wallman:  Dimension Theory. Princeton Mathematical Series, v.4, Princeton University Press, Princeton, N. J., 1941.


\bibitem{Olum} P. Olum: Mappings of manifolds and the notion of degree, {\it Annals of Mathematics}, vol 58, no. 3, (1953) 458--480.

\bibitem{Spanier} E. H. Spanier: Algebraic Topology. McGraw-Hill, 1966. 

\end{thebibliography}
\end{document}